\documentclass[12pt]{amsart}

\voffset -10truemm

\pdfoutput=1

\usepackage{latexsym}
\usepackage[centertags]{amsmath}
\usepackage{amsfonts}
\usepackage{amssymb}
\usepackage{amsthm}
\usepackage{newlfont}
\usepackage{graphics}
\usepackage{color}
\usepackage[usenames,dvipsnames]{xcolor}


\usepackage{parskip}
\usepackage{tikz}
\usepackage{eqnarray,verbatim}
\usepackage{mathrsfs}
\usepackage{enumerate}
\usepackage[T1]{fontenc}
\usetikzlibrary{patterns,arrows,decorations.pathreplacing}
\usepackage{epsfig}
\usepackage{graphicx}

\newcommand{\tref}[1]{Theorem \ref{#1}}
\newcommand{\lref}[1]{Lemma \ref{#1}}
\newcommand{\fref}[1]{Figure \ref{fig:#1}}
\newcommand{\taref}[1]{Table \ref{table:#1}}
\newcommand{\qtn}[1]{[#1]_{q,t}}

\newcommand{\fillll}[3]{\node at (#1-.5,#2-.5) {$#3$};}


\usepackage{graphicx}

\usepackage{wrapfig}

\usepackage{float} 

\parskip 5pt
\parindent 2em
\newtheorem{theo}{Theorem}

\newtheorem{lem} [theo]{Lemma}

\makeatletter \@addtoreset{equation}{section}
\@addtoreset{theo}{}\makeatother

\setlength{\topmargin}{-5mm} \setlength{\oddsidemargin}{0.2cm}
\setlength{\evensidemargin}{0.2cm} \setlength{\textwidth}{15.8cm}
\setlength{\textheight}{22.42cm}

\textheight=24cm 

\newcommand\qbinom[2]{\left[#1 \atop #2  \right]_q}

\def\N{\mathbb{N}}
\def\Z{\mathbb{Z}}

\newcommand{\el}{\ell}
\newcommand{\la}{\lambda}

\def\TH{\tilde{H}}

\newcommand\scalar[2]{\langle #1, \; #2  \rangle}
\newcommand{\drawtab}[3]{\begin{tikzpicture}[scale =.4] 
\draw[help lines] (0,0) grid (#1 + #2 + #3 + 1, 1);
\ifnum #1>0 
\foreach \dir in {1,...,#1}{
	\node at (\dir-.5,.5) {\footnotesize$1$};
};
\fi 
\ifnum #2>0 
\foreach \dir in {1,...,#2}{
	\node at (#1+\dir-.5,.5) {\footnotesize$2$};
};
\fi 
\ifnum #3>0 
\foreach \dir in {1,...,#3}{
	\node at (#1+#2+\dir-.5,.5) {\footnotesize$\overline{2}$};
};
\fi 
\node at (#1+#2+#3+.5,.5) {\footnotesize$\widehat{2}$};
\end{tikzpicture}}

\newcommand{\tabw}[4]{\begin{tabular}{c}
	\drawtab{#1}{#2}{#3}\\$#4$
\end{tabular}}

\newcommand{\tabwr}[4]{\begin{tabular}{c}
	\drawtab{#1}{#2}{#3}\\ {\color{red}$#4$}
\end{tabular}}

\title{On the Schur Positivity of $\Delta_{e_2} e_n[X]$}

\author{Dun Qiu$^1$, Jeffrey B. Remmel, Emily Sergel$^2$, and Guoce Xin$^3$}

\address{
	$^1$Department of Mathematics, University of California San Diego, La Jolla, CA 92093-0112, USA\\
	$^2$Department of Mathematics, University of Pennsylvania, Philadelphia, PA 19104-6395, USA\\
	$^3$School of Mathematical Sciences, Capital Normal University, Beijing 100048, PR China}

\email{
	$^1$\texttt{duqiu@ucsd.edu} \\
	\&\small \, $^2$\texttt{esergel@math.upenn.edu}
\\  \&\small \, $^3$\texttt{guoce.xin@gmail.com}}


\date{\today}

\begin{document}

\begin{abstract}
Let $\mathbb{N}$ denote the set of non-negative integers. Haglund, Wilson, and the second author have conjectured that the coefficient of any Schur function $s_\la[X]$ in $\Delta_{e_k} e_n[X]$ is a polynomial in $\mathbb{N}[q,t]$. We present four proofs of a stronger statement in the case $k=2$; We show that the coefficient 
of any Schur function $s_\lambda[X]$ in 
$\Delta_{e_2} e_n[X]$ has a positive expansion in terms of $q,t$-analogs.
\end{abstract}

\maketitle

\vspace{-5mm}

\section{Introduction} \label{s-intro}

Let $\Lambda$ denote the ring of symmetric functions with coefficients in $\mathbb{Q}(q,t)$. If $\mu$ is a partition of $n$, we shall write 
$\mu \vdash n$. Let $X=x_1+ \cdots +x_N$. 
The sets $\{e_{\mu}[X] : \mu \vdash n\}$, $\{s_{\mu}[X] : \mu \vdash n\}$
 and $\{\tilde{H}_{\mu}[X;q,t] : \mu \vdash n\}$ are the elementary, 
the Schur,  and the {(modified) Macdonald} symmetric function bases for $\Lambda^{(n)}$, the elements of $\Lambda$ that are homogeneous of degree $n$. Given a partition $\mu \vdash n$ and a cell $c$ in the Young diagram of $\mu$ (drawn in French notation), we set $a^{\prime}(c)$ and $\el^{\prime}(c)$ to be the number of cells in $\mu$ that are strictly to the left and strictly below $c$ in $\mu$, respectively. 
For example, if $\mu = (3,4,4,5)$ and $c$ is the cell pictured in 
Figure \ref{fig:leg}, then $a^{\prime}(c) =3$ is represented 
by the cells containing dots and $\el^{\prime}(c)=2$ is represented 
by the cells containing stars. 

\begin{figure}[H]
	\begin{center}
		\includegraphics[height=1.0in]{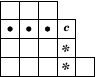}
		\caption{$a^{\prime}(c)$ and $\el^{\prime}(c)$ .}
		\label{fig:leg}
	\end{center}
\end{figure}

We set 
\begin{align*}
&B_{\mu}(q,t) = \sum_{c \in \mu} q^{a^{\prime}(c)} t^{\el^{\prime}(c)}, &T_{\mu}(q,t) = \prod_{c \in \mu} q^{a^{\prime}(c)} t^{\el^{\prime}(c)} .
\end{align*} 
Given any symmetric function $f \in \Lambda$, we define operators $\Delta_f$ and $\Delta^{\prime}_f$ on $\Lambda$ by their action on the Macdonald basis:
\begin{align*}
&\Delta_f \tilde{H}_{\mu}[X;q,t] = f[B_{\mu}(q,t)] \tilde{H}_{\mu}[X;q,t],
&\Delta^{\prime}_f  \tilde{H}_{\mu}[X;q,t] = 
f[B_{\mu}(q,t)-1] \tilde{H}_{\mu}[X;q,t].
\end{align*}
Here, we have used the notation that, for a symmetric function $f$ and a sum $A = a_1 + \ldots + a_N$ of monic monomials, $f[A]$ is equal to the specialization of $f$ at $x_1=a_1, \ldots, x_N=a_N$, where the remaining variables are set equal to zero. We also set $\nabla = \Delta_{e_n}$ as an operator on $\Lambda^{(n)}$. Note that, by definition, for any $1 \leq k \leq n$,
\begin{align}
\Delta_{e_k} e_n[X] = \Delta^{\prime}_{e_k + e_{k-1}} e_n[X] = \Delta^{\prime}_{e_k} e_n[X] + \Delta^{\prime}_{e_{k-1}} e_n[X] .
\end{align}
Furthermore, for any $k > n$, $\Delta_{e_k} e_n[X] = \Delta^{\prime}_{e_{k-1}} e_n[X] = 0$. Therefore $\Delta_{e_n} e_n[X] = \Delta^{\prime}_{e_{n-1}} e_n[X]$.

In \cite{HRW}, Haglund, Remmel, and Wilson conjectured 
a combinatorial interpretation of the coefficients that appear in the expansion 
of $\Delta_{e_k} e_n[X]$ in terms of the fundamental quasi-symmetric 
functions. Their conjecture is now referred to as the $\Delta$-conjecture. 
They also conjectured that coefficients in 
the Schur function expansion of $\Delta_{e_k} e_n[X]$ are  
polynomials in $q$ and $t$ with non-negative 
integer coefficients. There are two cases that are known. 
Namely, when $k =n$, then Haiman \cite{Haiman} proved that 
$\Delta_{e_{n}} e_n[X] = \nabla e_n[X] $ is the Frobenius 
image of the character generating 
function of the ring of diagonal co-invariants.  Thus in this 
case, repesentation theory tells us that the 
coefficient of the Schur function $s_{\la}[X]$, 
$\langle \nabla e_n[X],s_{\la}[X]\rangle$, is a polynomial 
in $q$ and $t$ with non-negative integer coefficients.  
Also in this case, the so-called ``Shuffle conjecture'' 
of Haglund, Haiman, Loehr, Remmel, and Ulyanov \cite{HHLRU} 
gives a combinatorial interpretation of the coefficients 
that arise in the expansion of $\nabla e_n[X]$ in terms 
of fundamental quasi-symmetric functions.  The Shuffle conjecture 
was recently proved by Carlsson and Mellit \cite{CM}. 

The other known case is when $k=1$. In \cite{HRW}, the authors proved that 
\begin{equation}
\Delta_{e_{1}} e_n[X] = \sum_{m=0}^{\lfloor n/2 \rfloor} s_{2^m, 1^{n-2m}}[X] \sum_{p=m}^{n-m} [p]_{q,t}
\end{equation}
where $[n]_{q,t} = \frac{q^n-t^n}{q-t} = 
q^{n-1} + q^{n-2}t+ \cdots + qt^{n-2}+ t^{n-1}$ for $n \geq 0$. 

 
The main goal of this paper is to  
to give four different  proofs of the fact that $\Delta_{e_2}e_n[X]$ 
is Schur positive, i.e. for all $\lambda \vdash n$, 
$\langle \Delta_{e_2} e_n[X],s_\lambda[X] \rangle \in \mathbb{N}[q,t]$, 
in hopes that some of the ideas in those proofs can be 
adapted to prove the Schur positivity of $\Delta_{e_k}e_n[X]$ for 
$k \geq 3$.

All of our proofs start with the following result of Haglund \cite{Hag}.
\begin{lem} For all $n$, $d$, and symmetric functions $f[X]$,
\begin{align}
  \scalar{ \Delta_{e_{d-1}} e_n[X]}{ f[X]} =\scalar{ \Delta_{\omega f} \, e_d[X]}{ s_d[X]}.
\end{align}
\end{lem}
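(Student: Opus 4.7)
The plan is to diagonalize both sides in the modified Macdonald basis $\{\TH_\mu[X;q,t]\}$, in which every $\Delta$-operator acts as a scalar. First, I would expand $e_n[X]$ via the standard identity
$$e_n[X] = \sum_{\mu \vdash n} \frac{T_\mu\,M B_\mu \Pi_\mu}{w_\mu}\,\TH_\mu[X;q,t],$$
where $M=(1-q)(1-t)$, $\Pi_\mu = \prod_{c \in \mu,\, c\neq(0,0)}(1-q^{a'(c)}t^{\el'(c)})$, and $w_\mu$ is the standard norming factor under which the $\TH_\mu$ are orthogonal. Since $\Delta_{e_{d-1}}\TH_\mu = e_{d-1}[B_\mu]\,\TH_\mu$, the left-hand side of the lemma becomes
$$\sum_{\mu \vdash n}\frac{T_\mu\,M B_\mu \Pi_\mu\,e_{d-1}[B_\mu]}{w_\mu}\,\scalar{\TH_\mu[X;q,t]}{f[X]}.$$

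Next, I would perform the analogous manipulation on the right-hand side: expand $e_d[X]$ in the Macdonald basis, use the eigenvalue $(\omega f)[B_\nu]$ of $\Delta_{\omega f}$, and obtain a similar sum indexed by $\nu \vdash d$ involving $\scalar{\TH_\nu[X;q,t]}{s_d[X]}$.

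The core task is then to match the two sums. The natural tool is Macdonald--Koornwinder reciprocity,
$$\frac{\TH_\mu[1+u(MB_\nu-1)]}{\prod_{c\in\mu}(1-uq^{a'(c)}t^{\el'(c)})} \;=\; \frac{\TH_\nu[1+u(MB_\mu-1)]}{\prod_{c\in\nu}(1-uq^{a'(c)}t^{\el'(c)})},$$
which swaps the two index partitions. To deploy it, one first converts $\scalar{\TH_\mu}{f}$ into a plethystic evaluation of $f$ at the alphabet $MB_\mu$, using the standard identities relating the Hall inner product to evaluations of $\TH_\mu$ together with the action of $\omega$; this is precisely what forces $\omega f$ rather than $f$ to appear on the right.

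The main obstacle is this translation step, since the two sides pair against test functions of different degrees ($f$ of degree $n$ versus $s_d$ of degree $d$) and one must carefully reconcile the plethystic alphabets, the sign conventions of $\omega$, and the different sizes of the indexing partitions. Once reciprocity is correctly applied to the resulting expressions, the two sums collapse to a common scalar, yielding the lemma.
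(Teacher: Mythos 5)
This lemma is not proved in the paper at all: it is quoted as a result of Haglund and attributed to \cite{Hag}, so the only meaningful comparison is with the standard argument in the literature, which is what your sketch is aiming at. Your ingredients are the right ones, but there are two errors and one essential missing step. First, the expansion you start from is wrong: the correct identity is $e_n[X]=\sum_{\mu\vdash n} M B_\mu \Pi_\mu\, \TH_\mu[X;q,t]/w_\mu$ with no factor of $T_\mu$ (the $T_\mu$ belongs to the expansion of $\nabla e_n$, not of $e_n$; you can check this against the paper's own formula for $e_2$, whose coefficients are $\pm 1/(t-q)$). Second, and more seriously, the ``translation step'' you describe --- converting $\scalar{\TH_\mu}{f}$ into a plethystic evaluation of $f$ (or $\omega f$) at the alphabet $MB_\mu$ --- is false as stated: for example $\scalar{\TH_\mu}{e_n}=T_\mu$, which is not $h_n[MB_\mu]$ or any simple multiple of it. The correct move is to use linearity of both sides in $f$ and take $f$ to range over the Hall-dual basis of $\{\TH_\mu\}$, namely $f=\TH_\gamma^{*}$ with $\scalar{\TH_\mu}{\TH_\gamma^{*}}=\delta_{\mu\gamma}$; then the left side collapses to the single term $\frac{MB_\gamma\Pi_\gamma}{w_\gamma}\,e_{d-1}[B_\gamma]$, while on the right one computes $(\omega \TH_\gamma^{*})[B_\nu]=\TH_\gamma[MB_\nu]/w_\gamma$, and only at that point does reciprocity $\TH_\gamma[MB_\nu]/\Pi_\gamma=\TH_\nu[MB_\gamma]/\Pi_\nu$ become applicable.

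Even after reciprocity you are not done: you are left having to show $\sum_{\nu\vdash d} MB_\nu\,\TH_\nu[MB_\gamma]/w_\nu = MB_\gamma\, e_{d-1}[B_\gamma]$, and this is where $e_{d-1}$ actually materializes. It does not follow from reciprocity; it requires a separate summation identity, obtained for instance by pairing the Cauchy kernel $e_d[XY/M]=\sum_{\nu\vdash d}\TH_\nu[X]\TH_\nu[Y]/w_\nu$ against $e_1[X]h_{d-1}[X]$ in the $X$ variables and using $\scalar{\TH_\nu}{e_1 h_{d-1}}=1+\tilde K_{(d-1,1),\nu}=B_\nu$ together with the Pieri rule $e_1e_{d-1}=s_{1^d}+s_{2,1^{d-2}}$. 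This step is entirely absent from your outline, and without it the two sides cannot be matched, so the proposal as written does not close. You correctly flagged the reconciliation of the degree-$n$ and degree-$d$ pairings as ``the main obstacle,'' but identifying an obstacle is not the same as overcoming it; the dual-basis reduction and the Cauchy-kernel computation above are precisely the content of the lemma.
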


Let $\lambda$ be any partition of $n$. By setting $f=s_{\lambda}$, we have
\begin{align}
\scalar{ \Delta_{e_{d-1}} e_n[X]}{ s_\lambda} =\scalar{ \Delta_{s_{\lambda'}} e_d}{ s_d}.
\end{align}

The formula works nicely when $d$ is small, since we have explicit expansion of $e_d$ in terms of Macdonald polynomials.
In the case $d=2$ we have
$$ e_2[X] = \frac{1}{t{-}q} \TH_{1,1}[X;q,t] - \frac{1}{t{-}q} \TH_{2}[X;q,t].$$
This leads to
\begin{align*}
  \scalar{ \Delta_{e_1} e_n[X]}{ s_\lambda[X]} &= \scalar{ \Delta_{s_{\lambda'}} e_2[X]}{ s_2[X]}\\
&= \Big\langle \frac{1}{t{-}q}s_{\lambda'}[1{+}t] \TH_{1,1}[X;q,t] 
- \frac{1}{t{-}q}s_{\lambda'}[1{+}q]  \TH_{2}[X;q,t],\ s_2[X] \Big\rangle \\
&=\frac{1}{t{-}q}s_{\lambda'}[1{+}t]  - \frac{1}{t{-}q}s_{\lambda'}[1{+}q] ,
\end{align*}
which is easily seen to be an element of $\mathbb{N}(q,t)$.

In the case $d=3$, the expansion of $e_3$ leads to the following formula.
\begin{multline}\label{e-g-lambda-o}
g_\lambda:=\scalar{\Delta_{e_2} e_n[X]}{ s_\lambda[X]} = \\
 \frac{(t{-}q^2) s_{\lambda'}[1{+}t{+}t^2] -(q{+}t{+}1)(t{-}q) s_{\lambda'}[1{+}q{+}t] +(t^2{-}q) s_{\lambda'}[1{+}q{+}q^2] }{(t{-}q)(t^2{-}q)(t{-}q^2)}
\end{multline}
At first glance, this formula does not seem to be useful. 
Indeed, it is not immediately obvious that this quotient is a polynomial.

Our (chronologically) first approach to proving 
that $g_\lambda$ is in $\mathbb{N}[q,t]$ 
 is based on the following observations. 
\begin{enumerate}[i)]
\item If $\lambda'$ has more than three parts, then $g_\lambda=0$;
\item If we expand $s_{a,b,c}[x+y+z]$ as a quotient of alternates, then from the view of MacMahon partition analysis, one can easily see that 
the generating function $$\sum_{a\ge b \ge c \ge 0} g_{(a,b,c)'} u_1^a u_2^b u_3^c$$ is a rational function.
\item Hence, it might be easier to show that this generating function has only nonnegative coefficients.
\end{enumerate}
We succeeded in this approach by finding a proof that can be easily verified by computer, but it is too long to be printed. 
We will explain this approach in Section \ref{s-gen-fun}, but we will not include full details.

Our other approaches rely on the following alternative representation of $g_\lambda$.
\begin{lem}\label{l3} Let $\tau$ be the operation which switches $t$ and $q$. Then
\begin{align}\label{e-Delta-e2}
g_\lambda=  \scalar{ \Delta_{e_2} e_n[X]}{ s_\lambda[X]} =\frac{F_{\lambda'} - \tau F_{\lambda'}}{t{-}q}= \frac{id {-} \tau}{t{-}q} F_{\lambda'},
\end{align}
where $\tau F = F\big|_{q=t,t=q}$ and 
\begin{align}
  F_{\lambda'} = \frac{ s_{\lambda'}[1{+}t{+}t^2] - s_{\lambda'}[1{+}t{+}q] }{t^2{-}q}.\label{e-Flambda}
\end{align}
\end{lem}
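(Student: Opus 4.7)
The identity \eqref{e-Delta-e2} is an algebraic rewriting of \eqref{e-g-lambda-o}, so the plan is to verify it by direct manipulation of rational functions. The key observation is that the argument $1+t+q$ of the middle symmetric-function evaluation is symmetric in $q$ and $t$, so $\tau$ fixes $s_{\lambda'}[1+t+q]$ while swapping $s_{\lambda'}[1+t+t^2]$ with $s_{\lambda'}[1+q+q^2]$.

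Abbreviating $A = s_{\lambda'}[1+t+t^2]$, $B = s_{\lambda'}[1+q+t]$, and $C = s_{\lambda'}[1+q+q^2]$, we have $F_{\lambda'} = (A-B)/(t^2-q)$ and $\tau F_{\lambda'} = (C-B)/(q^2-t)$. I would combine these over the common denominator $(t^2-q)(q^2-t)$ and collect the coefficients of $A$, $B$, and $C$ in the resulting numerator.

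The crux of the argument is that the coefficient of $B$ simplifies as
$$t^2 - q^2 + t - q = (t-q)(t+q+1).$$
This factorization supplies exactly the factor $(t-q)$ needed to cancel the division by $t-q$ in $(F_{\lambda'} - \tau F_{\lambda'})/(t-q)$, and it reproduces the middle coefficient $(q+t+1)(t-q)$ appearing in \eqref{e-g-lambda-o}. After rewriting $q^2 - t = -(t - q^2)$ to absorb signs, the coefficients of $A$ and $C$ become $t - q^2$ and $t^2 - q$ respectively, and the denominator becomes $(t-q)(t^2-q)(t-q^2)$ — matching \eqref{e-g-lambda-o} term by term.

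There is no deeper obstacle here: the lemma is a verification that two rational expressions in $q$ and $t$ with symmetric-function coefficients coincide. The only step requiring care is the sign bookkeeping when placing $F_{\lambda'}$ and $\tau F_{\lambda'}$ over a common denominator, combined with recognizing the factorization displayed above that produces the needed factor of $(t-q)$.
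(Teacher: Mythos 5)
Your proof is correct and is essentially the paper's argument run in reverse: the paper starts from \eqref{e-g-lambda-o} and uses the factorization $(t{-}q)(1{+}q{+}t) = (t{-}q^2)-(q{-}t^2)$ to recognize the numerator as $(id{-}\tau)$ applied to a single expression, whereas you expand $(F_{\lambda'}-\tau F_{\lambda'})/(t{-}q)$ over a common denominator and recover \eqref{e-g-lambda-o}, but the computation hinges on the identical factorization $t^2-q^2+t-q=(t{-}q)(t{+}q{+}1)$ together with the $\tau$-invariance of $s_{\lambda'}[1{+}q{+}t]$. There is no substantive difference between the two arguments.
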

\begin{proof}
By using the formula
$$(t{-}q)(1{+}q{+}t) 	\, = \, (t{-}q^2)-(q{-}t^2)		\, = \, (id{-}\tau)(t{-}q^2),$$
equation (\ref{e-g-lambda-o}) becomes
\begin{align*}
  \scalar{ \Delta_{e_2} e_n[X]}{ s_\lambda[X]} & = \frac{(id{-}\tau) (t{-}q^2) s_{\lambda'}[1{+}t{+}t^2] -(id{-}\tau)(t{-}q^2) s_{\lambda'}[1{+}q{+}t]}{(t{-}q)(t^2{-}q)(t{-}q^2)} \\
   &= \frac{1}{t{-}q} (id{-}\tau) \left( \frac{ s_{\lambda'}[1{+}t{+}t^2] - s_{\lambda'}[1{+}t{+}q] }{t^2{-}q}     \right).
\end{align*}
This is just the desired \eqref{e-Delta-e2}.
\end{proof}

We will show that $F_{\lambda'}$ is a polynomial that can be interpreted as a sum over semi-standard Young tableaux filled with numbers $0,1,2$.
From this formula, it is clear that $g_\lambda$ is in 
$\mathbb{Z}[q,t]$ where $\mathbb{Z} = \{0,\pm 1, \pm 2, \ldots \}$ is 
the set of integers. 

We present our second proof in Section \ref{s-injection}. We introduce 
new combinatorial objects, called ``enriched" semi-standard Young tableaux, to interpret the coefficients of $g_\lambda$. We then define 
an injection on these enriched tableaux which will allow us to prove 
that $g_\lambda$ is in $\mathbb{N}[q,t]$.

In Section \ref{s-direct-computation}, we present our third proof that $g_\lambda$ is in $\mathbb{N}[q,t]$. The proof 
in this section is a direct computation of $g_\lambda$ carried out 
by breaking $g_\lambda$ into a sum of terms where each term is easily seen to 
be a polynomial in $q$ and $t$ with non-negative coefficients. The 
advantage of this proof is that we can recursively produce explicit 
formulas for $g_\lambda$.

In Section \ref{s-gen-fun}, we shall expand our discussion 
of the generating function approach described above and describe 
an alternate way to analyze the resulting generating functions which is 
our fourth proof.

Finally in Section \ref{s-e3}, we give a formula of $\Delta_{e_3} e_n[X]$. 
However it is not clear how we can split up this formula into pieces 
which are easily seen to be polynomials in $q$ and $t$ with non-negative 
coefficients. Thus, the general problem of establishing the Schur-positivity of $\Delta_{e_d} e_n[X]$ seems to require new ideas. 


\section{Acknowledgements}

The authors would like to thank Professor Adriano Garsia for his invaluable contributions, including many productive discussions. The first, third, and fourth authors would also like to dedicate this paper to the memory of our second author, Professor Jeff Remmel, who passed away recently. His mentorship, collaboration, and friendship will be greatly missed.

The third author was partially supported by NSF grant DMS-1603681.

\section{Combinatorial Proof\label{s-injection}}
The idea is based on the following observation:
$$ \frac{(id {-}\tau) t^j q^i}{t{-}q} =-\frac{(id {-}\tau) t^i q^j}{t{-}q} \text{ and }\frac{(id {-}\tau) t^i q^j}{t{-}q}=(tq)^j [i{-}j]_{q,t}, \quad \mbox{ if } i\ge j.
$$
Thus if $F_{\lambda'}= \sum_{i,j} a_{i,j} q^i t^j$, we have
$$g_\lambda = \frac{(id {-}\tau) F_{\lambda'}}{t{-}q} = \sum_{i>j} (a_{i,j}{-}a_{j,i}) (tq)^j [i{-}j]_{q,t}.$$
To show that $g_\lambda \in \N[q,t]$, it is sufficient to show that
$a_{i,j}{-}a_{j,i}>0$ for every $i>j$. Note that this condition indeed shows the Schur-positivity of $g_\lambda$ in $q,t$-analogs, a stronger condition than $g_\lambda \in \N[q,t]$: for instance, $q^2{+}t^2\in \N[q,t] $ but $q^2{+}t^2=[3]_{q,t}{-}qt\, [1]_{q,t}$.

Now we have a combinatorial interpretation of $F_{\lambda'}$ using formula \eqref{e-Flambda} of Section \ref{s-intro}.
Firstly,
$$s_{\lambda'}[x_0{+}x_1{+}x_2] = \sum_{ T} x_0^{\kappa_0(T)}x_1^{\kappa_1(T)}x_2^{\kappa_2(T)}, $$
where the sum is over all semi-standard Young tableaux $T$ of shape $\lambda'$ filled with numbers $0,1,2$, and $\kappa_i(T)$ is the number of $i$'s in $T$.
Generic semi-standard Young tableaux $T$ of shape $\lambda'$ are pictured 
in \fref{1}.  For any given tableau $T$, we see that contribution 
to 
$$\frac{s_{\lambda'}[1{+}t{+}t^2]-s_{\lambda'}[1{+}t{+}q]}{t^2{-}q}$$
is 
$$t^{\kappa_1(T)}\frac{t^{\kappa_2(T)} {-} q^{\kappa_2(T)}}{t^2{-}q} = 
t^{\kappa_1(T)}[\kappa_2(T){-}1]_{t^2,q}.$$
Thus 
\begin{align*}
F_{\lambda'} = \sum_{ T} t^{\kappa_1(T)} [\kappa_2(T){-}1]_{t^2,q}.
\end{align*}
This can be interpreted as
\begin{align*}
F_{\lambda'} = \sum_{ T'} t^{\kappa_1(T')} q^{\kappa_2(T')} t^{2\kappa_{\bar 2}(T')},
\end{align*}
where $T'$ ranges over the following objects, which we call enriched tableaux:
$T'$ consists of a semi-standard Young tableau $T$ filled with $0,1,2$ and additional markings on some $2$'s. When reading the $2$'s from left to right in $T$, the corresponding cells in $T'$ contain some undecorated $2$'s (weighted by $q$), followed by some $\bar 2$'s (weighted by $t^2$), followed by a single $\hat 2$ (weighted by $1$). The remaining entries, $0$'s and $1$'s, get weights $1$ and $t$ respectively. See the figures below for examples. For each character $x \in \{0,1,2,\bar 2, \hat 2\}$, $\kappa_{x}(T')$ denotes the number of times $x$ occurs in $T'$.

\begin{theo} \label{EmilysThm}
	For any shape $\lambda$,
	$$
	g_\lambda = \sum_{T'} (tq)^{\kappa_2(T')} [\kappa_1(T'){+}2\kappa_{\bar 2}(T') {-}\kappa_2(T')]_{q,t}
	$$
	where the sum ranges over enriched tableaux $T'$ of shape $\lambda$ 
which
\begin{enumerate}
\item have a $\bar 2$ or $\hat 2$ in the third row,
\item have a $\hat 2$ in the second row \emph{and} fewer than $\kappa_1(T'){+}2\kappa_{\bar 2}(T') {-}\kappa_2(T')$ 1's at the top of columns of height 2, or 
\item have a $\hat 2$ in the bottom row, fewer than $2 \kappa_{\bar 2}$-many $2$'s in the bottom row \emph{and} fewer than $\kappa_1(T'){+}2\kappa_{\bar 2}(T') {-}\kappa_2(T')$ 1's at the top of columns of height 2.
\end{enumerate}
\end{theo}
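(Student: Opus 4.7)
The plan is to derive the formula for $g_\lambda$ via a sign-reversing bijection on enriched tableaux. Starting from the interpretation developed just before the statement, $F_{\lambda'} = \sum_{T'} q^{\kappa_2(T')}\, t^{\kappa_1(T')+2\kappa_{\bar 2}(T')}$, combined with \lref{l3}, I would set $i := \kappa_2(T')$ and $j := \kappa_1(T')+2\kappa_{\bar 2}(T')$ for each enriched tableau $T'$. The identity
\[
\frac{q^i t^j - t^i q^j}{t-q} \;=\; \begin{cases} (qt)^i\,[j-i]_{q,t} & j>i,\\ 0 & j=i,\\ -(qt)^j\,[i-j]_{q,t} & j<i,\end{cases}
\]
then gives
\[
g_\lambda \;=\; \sum_{T':\, j>i} (qt)^i\,[j-i]_{q,t} \;-\; \sum_{T':\, j<i} (qt)^j\,[i-j]_{q,t}.
\]

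Next, I would reduce the theorem to producing a statistic-swapping injection $\phi$ from enriched tableaux with $j<i$ into enriched tableaux with $j>i$ that satisfy \emph{none} of conditions (1), (2), (3). By ``statistic-swapping'' I mean that $\kappa_2(\phi(T'))=j$ and $\kappa_1(\phi(T'))+2\kappa_{\bar 2}(\phi(T'))=i$, so that the negative contribution $-(qt)^j[i-j]_{q,t}$ from $T'$ cancels exactly the positive contribution from $\phi(T')$. What survives is $\sum_{\text{good }T'}(qt)^i[j-i]_{q,t}$, the claimed formula. (In particular, good tableaux automatically have $j\ge i$, which is what makes the $q,t$-analog expression live in $\mathbb{N}[q,t]$.)

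The heart of the proof is to define $\phi$ explicitly. Since $g_\lambda$ vanishes unless $\lambda'$ has at most three rows, every column has height at most three, and I would work with a small library of local moves: promoting a $1$ sitting at the top of a column of height $2$ into a decorated $2$ (shifting $(i,j)$ by $(+1,-1)$ or $(+1,-2)$ according to decoration), converting an undecorated $2$ into a $\bar 2$ (shifting $(i,j)$ by $(-1,+2)$), and sliding the unique $\hat 2$ among the eligible cells. Each move changes $i-j$ in a controlled way, and stringing moves together should allow one to realize any swap $(i,j)\leftrightarrow(j,i)$ when there is ``enough room'' in the tableau to do so. The anticipated dictionary is that clauses (1)--(3) describe exactly when no compatible sequence of inverse moves exists: clause (1) catches tableaux whose excess is trapped against the third row, clause (2) handles cases where the $\hat 2$ has moved up to the second row and the supply of promotable column-top $1$'s is exhausted, and clause (3) is the analogous obstruction when everything is confined to the bottom row.

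The main obstacle will be pinning down the local moves so that they assemble into a coherent bijection rather than an ad hoc family of swaps, and verifying that the complement of the image is characterized precisely by (1)--(3). I expect this to reduce, after a case analysis on the row of $\hat 2$, to careful bookkeeping of $1$'s and $2$'s in the bottom two rows against the statistic $\kappa_1+2\kappa_{\bar 2}-\kappa_2$, with the strict ordering ``undecorated $2$'s, then $\bar 2$'s, then one $\hat 2$'' as the principal constraint.
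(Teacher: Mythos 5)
Your setup is exactly the paper's: you use Lemma \ref{l3} together with the enriched-tableau expansion $F_{\lambda'}=\sum_{T'}q^{\kappa_2(T')}t^{\kappa_1(T')+2\kappa_{\bar 2}(T')}$, apply the identity $\frac{q^it^j-t^iq^j}{t-q}=(qt)^{\min(i,j)}(\pm[|i-j|]_{q,t})$ termwise, and reduce the theorem to a weight-reversing injection from tableaux with more $q$'s than $t$'s into those with more $t$'s than $q$'s whose complementary set is described by conditions (1)--(3). That reduction is correct and matches the paper. The gap is that the injection itself --- which is the entire content of the theorem --- is never constructed: you list candidate local moves ($1\leftrightarrow 2$ at the top of a height-two column, $2\leftrightarrow\bar 2$, sliding $\hat 2$) and assert that ``stringing moves together should allow one to realize any swap,'' but you give no rule for which moves to apply in which order, no argument that the result is still an enriched tableau (the SSYT condition and the left-to-right ordering $2\cdots 2\,\bar 2\cdots\bar 2\,\hat 2$ both constrain this severely), no proof of injectivity, and no derivation that the non-image is exactly (1)--(3). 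A greedy composition of unit moves is not obviously injective, since different sources can reach the same target unless the moves are canonically scheduled.

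For comparison, the paper's map is not a chain of local moves but a two-case global swap of counts. One first observes that a tableau of weight $t^jq^i$ with $i>j$ can have no $\bar 2$ or $\hat 2$ in the third row (third-row $2$'s are balanced by the $1$'s below them), so height-three columns are frozen. Then: Case 1 ($\hat 2$ in the bottom row and at least $2\kappa_{\bar 2}$ undecorated $2$'s there) freezes $2\kappa_{\bar 2}$ of those $2$'s and \emph{exchanges the number} of $1$'s and unfrozen $2$'s in the bottom row and simultaneously the number of $1$'s and $2$'s sitting above $0$'s, then reorders each row; Case 2 (otherwise) shows by a weight count that there are at least $i-j$ undecorated $2$'s above $0$'s and turns the leftmost $i-j$ of them into $1$'s. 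Injectivity comes from explicit partial inverses, disjointness of the two images from the position of $\hat 2$ and the count of bottom-row $2$'s, and conditions (1)--(3) fall out as precisely the tableaux where neither partial inverse applies. Your clause-by-clause intuition for (1)--(3) points in the right direction, but without the explicit map and its inverses the characterization of the leftover set --- and hence the stated formula --- remains unproved.
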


\begin{proof}
	
	Following the remarks above, for each $i>j$, we will give an injection from enriched tableaux of weight $t^j q^i$ (which are counted by $a_{j,i}$) into those of weight $t^i q^j$ (counted by $a_{i,j}$). The enriched tableaux which are not in the image of this injection will be precisely those enumerated above. 
	
	Let $i>j$ and let $T'$ be an enriched filling of the (french) Young diagram of $\lambda$ with weight $t^j q^i$. Note that $\kappa_{2} > 2\kappa_{\bar 2}$ since undecorated $2$'s are the only entries contributing $q$'s to the weight of $T'$. Note also that $T'$ cannot have a $\bar 2$ or $\hat 2$ in the third row. This is because all $2$'s in the third row are ``balanced" by the $1$'s which must lie beneath them, and the presence of a $\bar 2$ or $\hat 2$ in the third row makes it impossible to gain any more powers of $q$ later in $T'$. Hence we can safely ignore (fix) all columns of height 3.
	
	\textbf{Case 1}: Suppose that the single $\hat 2$ lies in the bottom row. Further suppose that there are at least $2 \kappa_{\bar 2}$-many $2$'s in the bottom row. (Recall that if there are \emph{any} $2$'s in the bottom row, then \emph{all} $\bar 2$'s are also in the bottom row.) Construct $T''$ as follows: Freeze $2 \kappa_{\bar 2}$-many $2$'s in the bottom row. Then exchange the number of $1$'s and unfrozen $2$'s in the bottom row, and also the number of $1$'s and $2$'s above $0$'s (in the second row). Then reorder cells within these rows to make them weakly increasing.

	\begin{figure}[H]
		\begin{center}
			\includegraphics[height=1.2in]{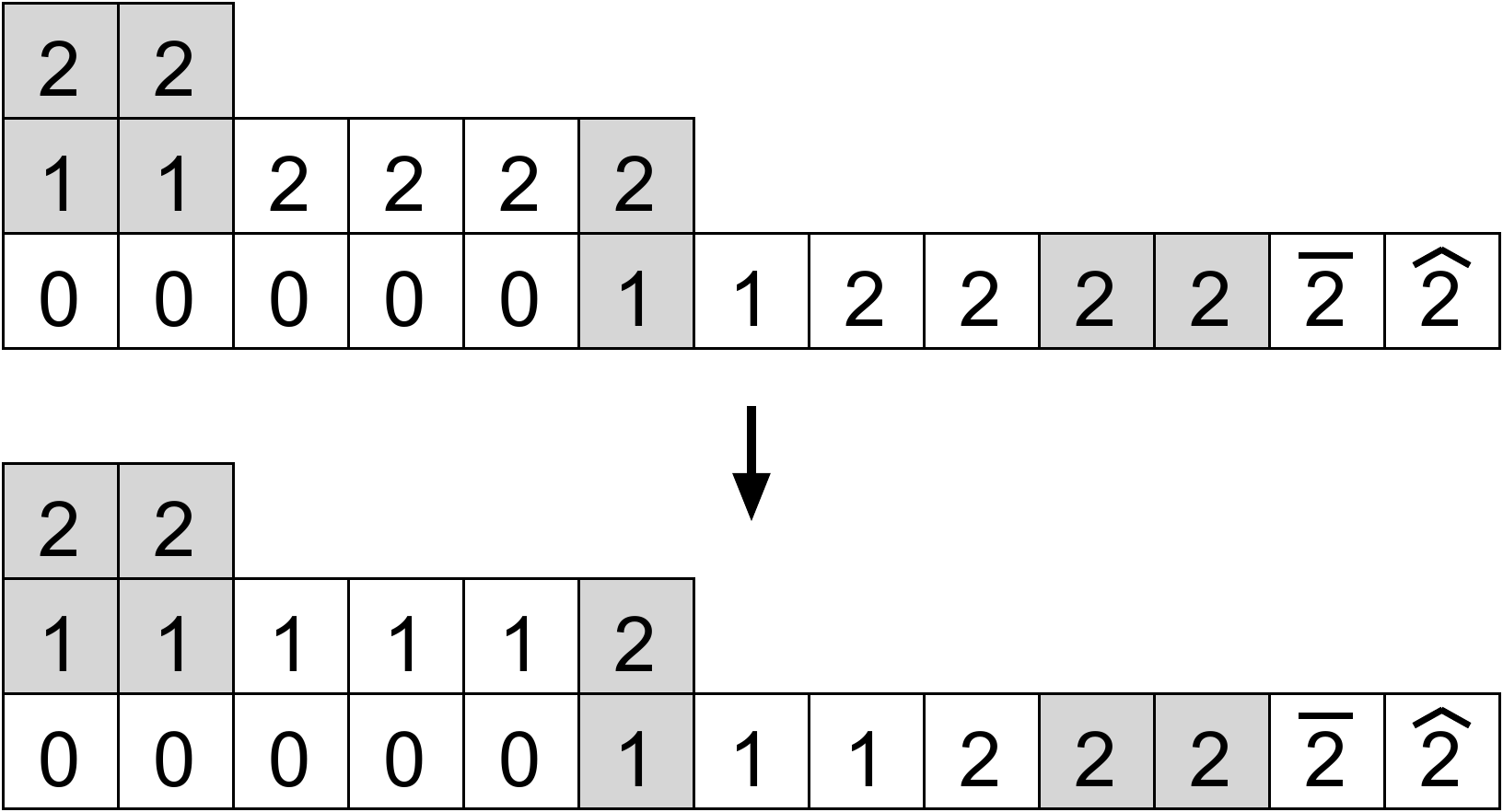}
			\caption{An illustration of Case 1 with $i=10$ and $j=6$. Grey columns are fixed.}
			\label{fig:case1}
		\end{center}
	\end{figure}
	
	\textbf{Case 2:} Suppose that the single $\hat 2$ is not in the bottom row or that there are fewer than $2 \kappa_{\bar 2}$-many $2$'s in the bottom row. Note that in the former situation, there are no $2$'s in the bottom row. Hence, either way, the total weight of all cells in columns of height 1 has a (weakly) larger power of $t$ than $q$. Furthermore, we noted above that the weight of the columns of height 3 has equal powers of $t$ and $q$. Hence the total weight of the columns of height 2 must be $t^b q^a$ for some $a,b$ with $a{-}b \geq i{-}j$. In particular, the number of $2$'s above $0$'s must be at least $i{-}j$ (since $2$'s above $1$'s are ``balanced"). To construct $T''$, simply change the leftmost $i{-}j$ of these to $1$'s (still above $0$'s).
	
	\begin{figure}[H]
		\begin{center}
			\includegraphics[height=1.2in]{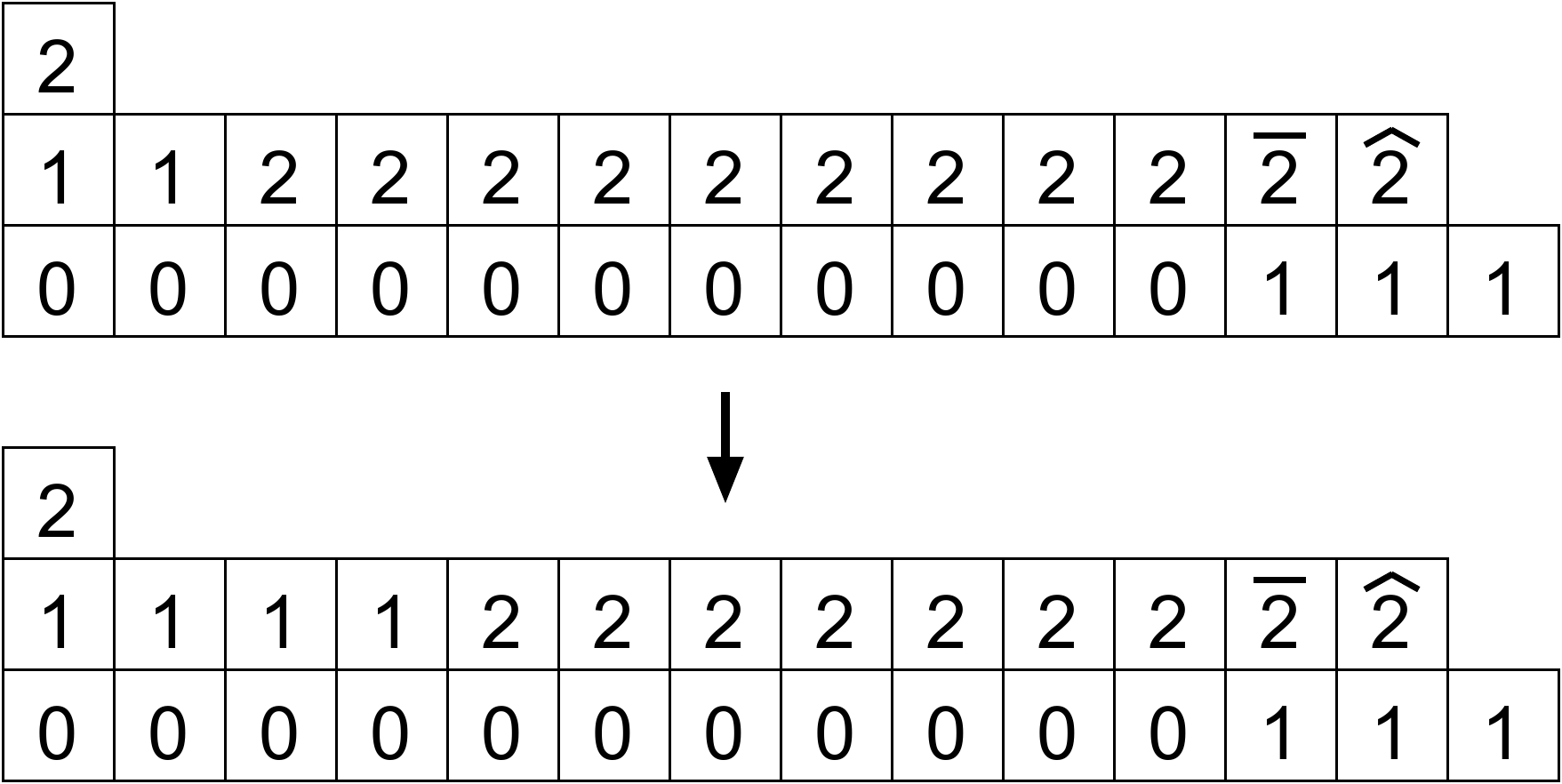}
			\caption{An illustration of Case 2 with $i=10$ and $j=8$.}
			\label{fig:case2}
		\end{center}
	\end{figure}
	
	It is easy to see that each of these maps alone is injective. If you know a particular enriched tableaux $T''$ is the image of a Case 1 tableau $T'$, you can simply freeze $2\kappa_{\bar 2}$-many $2$'s in the bottom row and then switch the roles of 1's and 2's back to reconstruct $T'$. If you know $T''$ is a Case 2 image, you just swap as $(i{-}j)$-many $1$'s for $2$'s at the tops of columns of height $2$.
	
	Furthermore, these images don't intersect: the image of a Case 1 tableau always has the $\hat 2$ and at least $2\kappa_{\bar 2}$-many $2$'s in the bottom row and the image of a Case 2 tableau never does. Hence for any shape $\lambda$ and any $i>j$, these maps together form an injection from enriched tableaux of shape $\lambda$ of weight $t^j q^i$ into those of weight $t^i q^j$.
	
	Using the partial inverses mentioned above, we can see that the enriched tableaux of weight $t^i q^j$ not in the image of our injection are those which
	1) have a $\bar 2$ or $\hat 2$ in the third row;
	2) have a $\hat 2$ in the second row \emph{and} fewer than $i{-}j$ 1's at the top of columns of height 2; or 
	3) have a $\hat 2$ in the bottom row, fewer than $2 \kappa_{\bar 2}$-many $2$'s in the bottom row \emph{and} fewer than $i{-}j$ 1's at the top of columns of height 2. This gives the desired combinatorial interpretation of $ \sum_{i>j} (a_{i,j}{-}a_{j,i}) t^i q^j$ as an enumeration of certain enriched tableaux.
\end{proof}

For example, consider the case $\lambda = (3^1,2^4,1^5)$. Figure \ref{fig:InjEx} shows all enriched tableaux of shape $\lambda'$ and weight $t^5 q^7$ together with their images under the injection above. For the tableaux belonging to Case 1, the frozen cells are shaded. Only the last falls into Case 2 from the proof of the theorem. Then in Figure \ref{fig:LeftoverEx} we give all the remaining enriched tableaux of weight $t^7 q^5$.

Note that in this example there are no such enriched tableaux which have a $\bar 2$ or $\hat 2$ in the third row. This can only happen when all $2$'s are at the tops of columns of height $3$, that is, when the power of $q$ is less than the number of parts of size $3$ in $\lambda$. Similarly, there are no enriched tableaux which have a $\hat 2$ in the second row. This can only happen when the power of $q$ is less than the number of parts of size $2$ or $3$.

\begin{figure}[H]
	\begin{center}
		\includegraphics[height=2.7in]{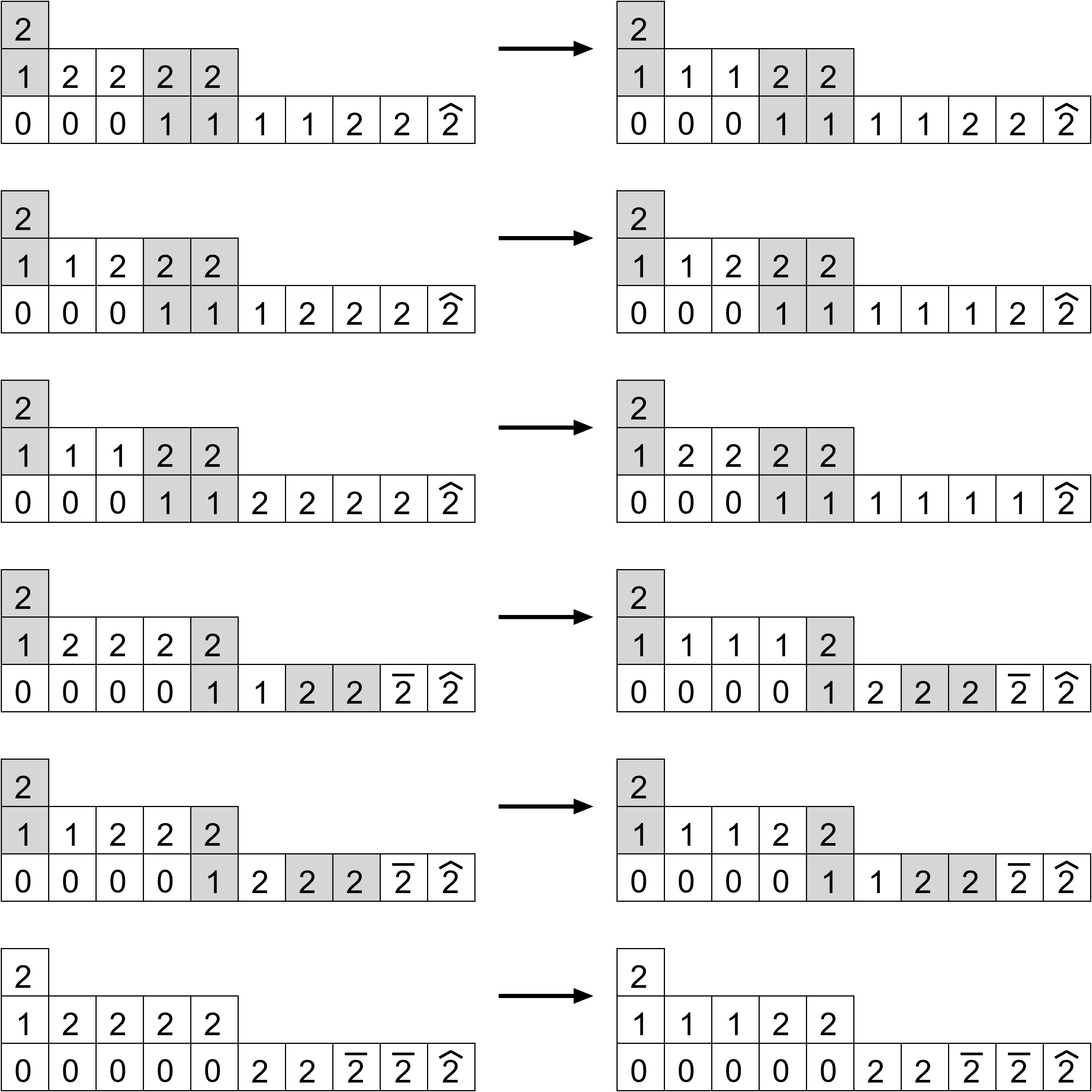}
		\caption{All enriched tableaux of shape $(5,4,1)$ and weight $t^5 q^7$ along with their images under the injection from the proof of Theorem \ref{EmilysThm}.}
		\label{fig:InjEx}
	\end{center}
\end{figure}

\begin{figure}[H]
	\begin{center}
		\includegraphics[height=2.25in]{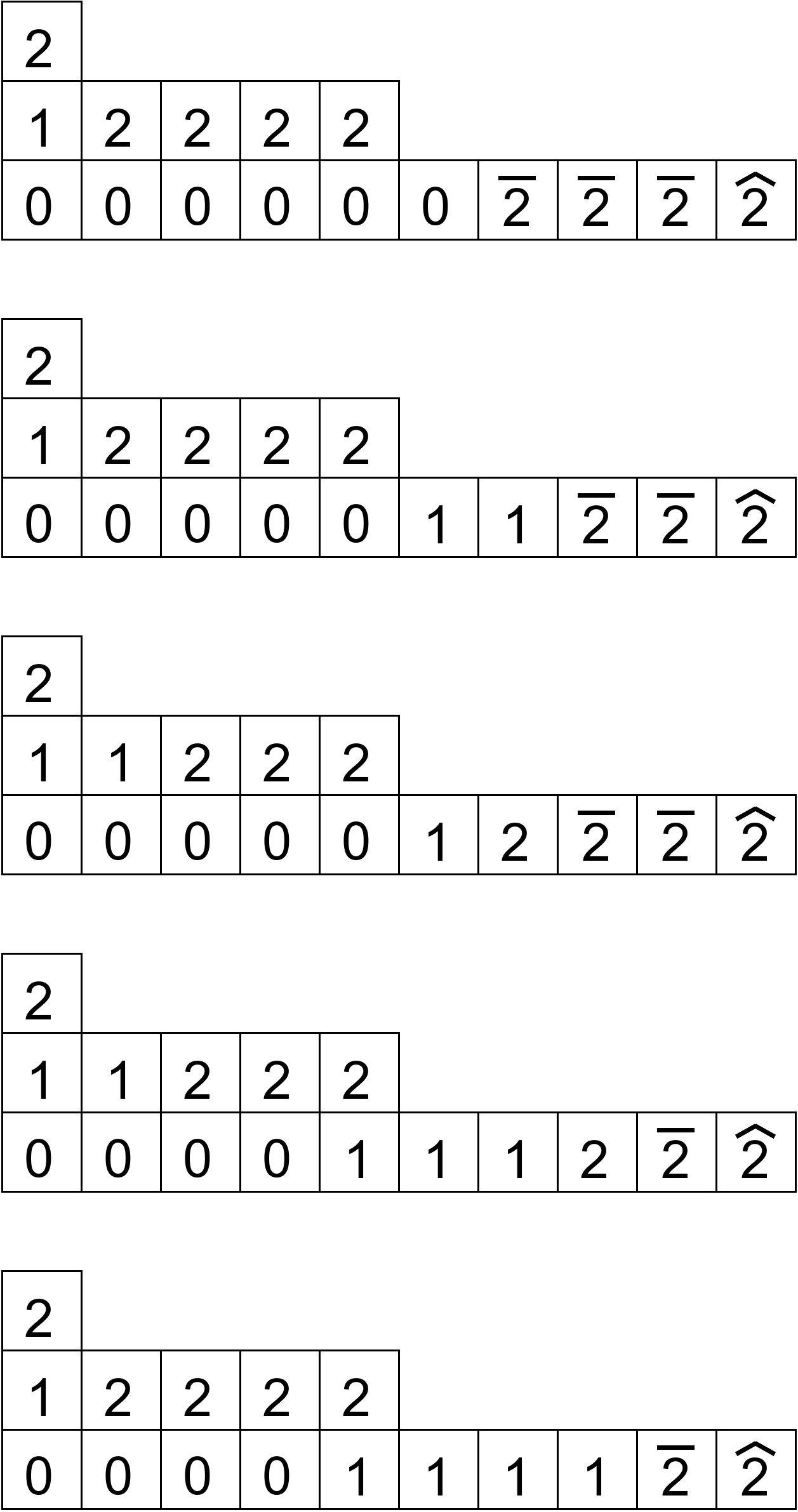}
		\caption{All enriched tableaux of shape $(5,4,1)$ and weight $t^7 q^5$ not included in Figure \ref{fig:InjEx}.}
		\label{fig:LeftoverEx}
	\end{center}
\end{figure}

\section{Proof by Direct Computation\label{s-direct-computation}}

\subsection{Preliminaries}

In this section, we shall show how we can compute an 
explicit formula for $g_{\lambda}$.

We let 
\begin{enumerate}[(1)]
	\item $[n]_q=q^{n-1}{+}q^{n-2}{+}\cdots{+}1=\frac{q^n-1}{q-1}$,
	\item $\qtn{n}=q^{n-1}{+}q^{n-2}t{+}\cdots{+}t^{n-1}=\frac{q^n-t^n}{q-t}$ for $n\geq 0$,
	\item $\qtn{-n}=\frac{q^{-n}-t^{-n}}{q-t}=\frac{-\qtn{n}}{(qt)^n}$ for $n>0$, and 
	\item $\qtn{n\rightarrow m}=\sum_{i=n}^{m}\qtn{i}=\frac{\sum_{i=n}^{m}t^i-\sum_{i=n}^{m}q^i}{t-q}=\frac{t^n[m-n+1]_t-q^n[m-n+1]_q}{t-q}$ or alternatively $\frac{(q-1)(t^{m+1}-t^n)-(t-1)(q^{m+1}-q^n)}{(t-1)(q-1)(t-q)}$.
\end{enumerate}

We know that $g_\lambda =0$ if 
$\lambda'$ has more than 3 rows. Thus we can assume 
that $\lambda'$ has 3 or fewer rows. We let  
$\mathrm{SSYT}(\lambda',012)$ denote the 
set of all semi-standard Young tableaux $T$ of shape $\lambda'$ with cells filled by $\{0,1,2\}$. Given a semi-standard Young tableau $T\in\mathrm{SSYT}(\lambda',012)$, the contribution of $T$ to $g_{\lambda}$ is denoted as $g_T$. This is also known as $T$'s weight. We can write
$$
g_{\lambda}=\sum_{T\in\mathrm{SSYT}(\lambda',012)}g_T.
$$

Since we are only considering the weight of $T\in\mathrm{SSYT}(\lambda',012)$, we can write $T$ in $4$ parts as shown in \fref{1}: $a_1$ -- the part with $3$ rows, $k_1$ -- the part with two rows and the bottom row is filled with $0$'s, $a_2$ -- the part with two rows and the bottom row is filled with $1$'s, $k_2$ -- the part with one row and the fillings are not $0$. If there is no $a_2$ part, there can be a part called $a_0$ at the same place which consists of one row filled with $0$'s. In our weighting scheme for $T\in\mathrm{SSYT}(\lambda',012)$ given below, the weight of any 0 will be 1. 
Hence $a_0$ won't contribute anything to $g_T$ so that we will not consider $a_0$ in our formulas. We define the set $S_{\lambda}[a_1,k_1,a_2,k_2]$ to be the collections of $T$'s having the part composition $[a_1,k_1,a_2,k_2]$. Since $a_1$ and $a_2$ have exactly the same kind of contribution to the formula, we can define
$$
g_{\lambda}[a_1{+}a_2,k_1,k_2]=\sum_{T\in S_{\lambda}[a_1,k_1,a_2,k_2]}g_T.
$$

\begin{figure}[ht]
	\centering
	\begin{tikzpicture}[scale =.5]
	\draw[help lines] (0,0) rectangle (3,3);
	\draw[help lines] (0,0) rectangle (6,2);
	\draw[help lines] (0,0) rectangle (9,2);
	\draw[help lines] (0,0) rectangle (12,1);
	\fillll{1.1}{1}{\cdots}\fillll{2}{1}{0}\fillll{3}{1}{\cdots}\fillll{4}{1}{0}\fillll{5}{1}{\cdots}\fillll{6}{1}{0}
	\fillll{7}{1}{1}\fillll{8}{1}{\cdots}\fillll{9}{1}{1}\fillll{10}{1}{1}\fillll{11}{1}{\cdots}\fillll{12}{1}{2}
	\fillll{1.1}{2}{\cdots}\fillll{2}{2}{1}\fillll{3}{2}{\cdots}\fillll{4}{2}{1}\fillll{5}{2}{\cdots}\fillll{6}{2}{2}
	\fillll{7}{2}{2}\fillll{8}{2}{\cdots}\fillll{9}{2}{2}
	\fillll{1.1}{3}{\cdots}\fillll{2}{3}{2}\fillll{3}{3}{\cdots}
	\draw [thick, blue,decorate,decoration={brace,amplitude=5pt,mirror},xshift=0.4pt,yshift=-0.4pt](0,0) -- (3,0) node[black,midway,yshift=-.4cm] {\footnotesize $a_1$};
	\draw [thick, blue,decorate,decoration={brace,amplitude=5pt,mirror},xshift=0.4pt,yshift=-0.4pt](3,0) -- (6,0) node[black,midway,yshift=-.4cm] {\footnotesize $k_1$};
	\draw [thick, blue,decorate,decoration={brace,amplitude=5pt,mirror},xshift=0.4pt,yshift=-0.4pt](6,0) -- (9,0) node[black,midway,yshift=-.4cm] {\footnotesize $a_2$};
	\draw [thick, blue,decorate,decoration={brace,amplitude=5pt,mirror},xshift=0.4pt,yshift=-0.4pt](9,0) -- (12,0) node[black,midway,yshift=-.4cm] {\footnotesize $k_2$};
	\fillll{6}{-1.3}{\textnormal{(a)}}
	\end{tikzpicture}
	\begin{tikzpicture}[scale =.5]
	\draw (-1.5,1.5) node {or};
	\path (-2.5,1.5);
	\draw[help lines] (0,0) rectangle (3,3);
	\draw[help lines] (0,0) rectangle (6,2);
	\draw[help lines] (0,0) rectangle (9,1);
	\draw[help lines] (0,0) rectangle (12,1);
	\fillll{1.1}{1}{\cdots}\fillll{2}{1}{0}\fillll{3}{1}{\cdots}\fillll{4}{1}{0}\fillll{5}{1}{\cdots}\fillll{6}{1}{0}
	\fillll{7}{1}{0}\fillll{8}{1}{\cdots}\fillll{9}{1}{0}\fillll{10}{1}{1}\fillll{11}{1}{\cdots}\fillll{12}{1}{2}
	\fillll{1.1}{2}{\cdots}\fillll{2}{2}{1}\fillll{3}{2}{\cdots}\fillll{4}{2}{1}\fillll{5}{2}{\cdots}\fillll{6}{2}{2}
	\fillll{1.1}{3}{\cdots}\fillll{2}{3}{2}\fillll{3}{3}{\cdots}
	\draw [thick, blue,decorate,decoration={brace,amplitude=5pt,mirror},xshift=0.4pt,yshift=-0.4pt](0,0) -- (3,0) node[black,midway,yshift=-.4cm] {\footnotesize $a_1$};
	\draw [thick, blue,decorate,decoration={brace,amplitude=5pt,mirror},xshift=0.4pt,yshift=-0.4pt](3,0) -- (6,0) node[black,midway,yshift=-.4cm] {\footnotesize $k_1$};
	\draw [thick, blue,decorate,decoration={brace,amplitude=5pt,mirror},xshift=0.4pt,yshift=-0.4pt](6,0) -- (9,0) node[black,midway,yshift=-.4cm] {\footnotesize $a_0$};
	\draw [thick, blue,decorate,decoration={brace,amplitude=5pt,mirror},xshift=0.4pt,yshift=-0.4pt](9,0) -- (12,0) node[black,midway,yshift=-.4cm] {\footnotesize $k_2$};
	\fillll{6}{-1.3}{\textnormal{(b)}}
	\end{tikzpicture}
	\vspace{-3mm}
	\caption{$T\in\mathrm{SSYT}(\lambda',012)$}
	\label{fig:1}
\end{figure}

By \lref{l3}, we can simplify the formula for $g_{\lambda}$ as:
\begin{eqnarray*}
	g_{\lambda}&=&\langle\Delta_{e_2}e_n[X],s_{\lambda} \rangle \\&=&\frac{id -\tau}{t-q} F_{\lambda'}\\
	&=&\frac{id {-}\tau}{t{-}q} \, \frac{ s_{\lambda'}[1{+}t{+}t^2] - s_{\lambda'}[1{+}t{+}q] }{t^2{-}q}\\
	&=&\frac{s_{\lambda'}[1{+}t{+}t^2]-s_{\lambda'}[1{+}t{+}q]}{(t{-}q)(t^2{-}q)} +\frac{s_{\lambda'}[1{+}q{+}q^2]-s_{\lambda'}[1{+}q{+}t]}{(t{-}q)(t{-}q^2)}.
\end{eqnarray*}

Suppose a Young tableau $T\in\mathrm{SSYT}(\lambda',012)$ has $\omega_1$ $1$'s and $\omega_2$ $2$'s. Then it has weight
\begin{eqnarray*}
	g_T&=&\frac{t^{\omega_1+2\omega_2}{-}t^{\omega_1}q^{\omega_2}}{(t{-}q)(t^2{-}q)}+\frac{q^{\omega_1+2\omega_2}{-}q^{\omega_1}t^{\omega_2}}{(t{-}q)(t{-}q^2)}\\
	&=&\frac{t^{\omega_1}[\omega_2]_{t^2,q}{-}q^{\omega_1}[\omega_2]_{q^2,t}}{t{-}q}.
\end{eqnarray*}

Now we define
$$
w(\omega_1,\omega_2)=\frac{t^{\omega_1}[\omega_2]_{t^2,q}{-}q^{\omega_1}[\omega_2]_{q^2,t}}{t{-}q}
$$
and
$$
W(T)=w(\omega_1,\omega_2)=\frac{t^{\omega_1}[\omega_2]_{t^2,q}{-}q^{\omega_1}[\omega_2]_{q^2,t}}{t{-}q},
$$
Then it is clear that
$$
g_{\lambda}[a_1{+}a_2,k_1,k_2]=\sum_{T\in S_{\lambda}[a_1,k_1,a_2,k_2]}g_T\ =\sum_{T\in S_{\lambda}[a_1,k_1,a_2,k_2]}W(T).
$$
We will use the new weight $W(T)$ to deduce a formula for $g_{\lambda}[a_1{+}a_2,k_1,k_2]$ which will, in turn, allow us to compute an 
explicit formula for $g_{\lambda}$.

\subsection{The computation of $g_{\lambda}[a_1{+}a_2,k_1,k_2]$}

\subsubsection{A formula for  $g_{\lambda}[0,0,k]$}

The set $S[0,0,0,k]$ contains the tableaux $T$ of shape
\raisebox{-11pt}{\begin{tikzpicture}[scale =.3]
	\draw[help lines] (0,0) rectangle (3,1);
	\fillll{1}{1}{1}\fillll{2}{1}{\cdots}\fillll{3}{1}{2}
	\draw [thick, blue,decorate,decoration={brace,amplitude=5pt,mirror},xshift=0.4pt,yshift=-0.4pt](0,0) -- (3,0) node[black,midway,yshift=-.4cm] {\footnotesize $k$};
	\end{tikzpicture}}. If there are $i$ $1$'s, then there will be $k-i$ $2$'s. We then have the following theorem. For any statement $A$, we let 
$\chi(A) =1$ if $A$ is true and $\chi(A) =0$ if $A$ is false. 

\begin{theo}\label{Dun1} $g_{\lambda}[0,0,1] = 0$ and, for 
$k \geq 2$, 
\begin{equation}\label{00k}	
g_{\lambda}[0,0,k]=\sum_{i=0}^{\lfloor (2k-2)/3 \rfloor -\chi(k \ \equiv \ 1 \ \mathrm{mod} \ 3)}(qt)^i \Big[ 
k {-} i {-} \lfloor \textstyle\frac{i+1}{2} \rfloor\rightarrow 2k {-} 2 {-} 3i \Big]_{q,t}.
\end{equation}
\end{theo}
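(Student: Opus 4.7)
The case $k=1$ is immediate: the only tableaux have $w(1,0)=w(0,1)=0$. For $k\ge 2$, I would start from
\[
g_\lambda[0,0,k] \;=\; \sum_{i=0}^{k-1} w(i,k-i) \;=\; \frac{1}{t-q}\sum_{i=0}^{k-1}\Big(t^i[k-i]_{t^2,q}-q^i[k-i]_{q^2,t}\Big),
\]
and expand $[k-i]_{t^2,q}=\sum_{\ell=0}^{k-i-1}t^{2(k-i-1-\ell)}q^\ell$ (together with its $\tau$-image) so that
\[
g_\lambda[0,0,k] \;=\; \sum_{i=0}^{k-1}\sum_{\ell=0}^{k-i-1}\frac{t^{A_{i,\ell}}q^\ell-q^{A_{i,\ell}}t^\ell}{t-q},\qquad A_{i,\ell}:=2k-i-2-2\ell.
\]
The elementary identity $\frac{t^aq^b-q^at^b}{t-q}=\mathrm{sgn}(a-b)\,(qt)^{\min(a,b)}[|a-b|]_{q,t}$ then classifies each summand by the sign of $A_{i,\ell}-\ell=2k-i-2-3\ell$, splitting the double sum into positive contributions $(qt)^\ell[2k-i-2-3\ell]_{q,t}$, negative contributions $-(qt)^{A_{i,\ell}}[3\ell+i-2k+2]_{q,t}$, and vanishing terms.

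The heart of the argument is a sign-reversing involution
\[
\phi(i,\ell) \;:=\; \bigl(3\ell+2i-2k+2,\;2k-2-i-2\ell\bigr)
\]
on the index set. A short check shows that $\phi^2=\mathrm{id}$, that $\phi$ preserves $i+\ell$ (hence the validity bound $\ell\le k-1-i$), and that its fixed points satisfy $2k-i-2-3\ell=0$, i.e., are precisely the vanishing summands. A direct computation shows that $\phi$ sends the positive-contribution pair $(i,\ell)$ to a pair $(i',\ell')$ with $A_{i',\ell'}=\ell$ and $3\ell'+i'-2k+2=2k-2-i-3\ell$, so the contribution at $\phi(i,\ell)$ is exactly $-(qt)^{\ell}[2k-i-2-3\ell]_{q,t}$: the $(qt)$-exponents and brackets match, and the two cancel. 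The image $\phi(i,\ell)$ lies in the valid index set precisely when $3\ell+2i\ge 2k-2$; this holds automatically for all negative-contribution pairs (since $3\ell+i>2k-2$ already), but fails for positive-contribution pairs with $3\ell+2i\le 2k-3$, which therefore survive.

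Reorganizing the surviving pairs with $\ell$ as the outer index and summing over $0\le i\le\lfloor(2k-3-3\ell)/2\rfloor$, the inner bracket sum collapses into a single range:
\[
g_\lambda[0,0,k] \;=\; \sum_{\ell\ge 0}(qt)^\ell\bigl[L(\ell)\rightarrow 2k-2-3\ell\bigr]_{q,t},\qquad L(\ell):=2k-2-3\ell-\lfloor(2k-3-3\ell)/2\rfloor.
\]

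The main obstacle is the final bookkeeping with floor functions. I would verify by a parity case analysis on $\ell$ that $L(\ell)=k-\ell-\lfloor(\ell+1)/2\rfloor$, matching the lower limit in~\eqref{00k}. Then a case analysis on $k\bmod 3$ would show that the largest $\ell$ for which the bracket range is nonempty equals $\lfloor(2k-3)/3\rfloor=\lfloor(2k-2)/3\rfloor-\chi(k\equiv 1\pmod 3)$, matching the upper limit in the theorem. Relabeling $\ell\mapsto i$ then yields formula~\eqref{00k}.
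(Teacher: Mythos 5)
Your proposal is correct, and I checked the key computations: $\phi^2=\mathrm{id}$, $\phi$ preserves $i+\ell$, $A_{\phi(i,\ell)}=\ell$ and $\ell'=A_{i,\ell}$ (so the contribution at $\phi(i,\ell)$ is exactly the negative of that at $(i,\ell)$), the validity criterion $i'\ge 0\iff 3\ell+2i\ge 2k-2$ (with $\ell'\ge 0$ automatic since $\ell'=2k-2-i-2\ell\ge i$ when $i+\ell\le k-1$), the identity $L(\ell)=k-\ell-\lfloor(\ell+1)/2\rfloor$, and the equality $\lfloor(2k-3)/3\rfloor=\lfloor(2k-2)/3\rfloor-\chi(k\equiv 1\bmod 3)$ for the upper limit. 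The route differs from the paper's in a worthwhile way. Both arguments start from the same double sum $\sum_{i}\sum_{\ell}(qt)^\ell[2k-2-i-3\ell]_{q,t}$, and in fact your cancellation is the \emph{same} cancellation as the paper's: since $\phi$ preserves $i+\ell$, it acts within the paper's anti-diagonals, and on the anti-diagonal $i+\ell=c$ it reduces to the pairing $a\leftrightarrow 3c-2k+2-a$ that the paper verifies subcase by subcase. What you have done differently is to write this pairing as a single closed-form sign-reversing involution whose properties are checked by two lines of algebra, uniformly in $k$ and $\ell$. This collapses the paper's three cases on $k\bmod 3$, each with two parity subcases plus a separate argument identifying which terms survive in each column, into one computation; the only residual case analysis in your version is the floor-function bookkeeping for the upper summation limit, which is genuinely trivial. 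The paper's presentation, in exchange, makes the column-by-column structure of the answer (which terms of each column survive) visually explicit via the table of $A^{(k)}_{i,j}$, but your involution is the cleaner proof and I would recommend writing it out in full.
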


\begin{proof}

It is easy to see by direct calculation that $g_{\lambda}[0,0,1] = 0$. 
Next observe that  for any $r \geq 1$, $\omega(r,0) =0$. Thus 
we need only consider the cases where there is at least one 
2 in the tableau. It follows 
that 

\begin{eqnarray*}
		g_{\lambda}[0,0,k]&=&\sum_{T\in S[0,0,0,k]}g_T\\
		&=&\sum_{i=0}^{k-1}w(i,k{-}i)\\
		&=&\sum_{i=0}^{k-1}\frac{t^{i}[k{-}i]_{t^2,q}{-}q^{i}[k{-}i]_{q^2,t}}{t{-}q}\\
		&=&\sum_{i=0}^{k-1}\frac{\sum_{j=0}^{k-1-i}t^{2k-2j-i-2}q^{j}{-}q^{2k-2j-i-2}t^{j}}{t{-}q}  \\
		&=&\sum_{i=0}^{k-1}\sum_{j=0}^{k-1-i}(qt)^j\qtn{2k{-}3j{-}i{-}2}.
\end{eqnarray*}
Now let $A^{(k)}_{i,j} = (qt)^j\qtn{2k{-}3j{-}i{-}2}$. In \fref{table}, 
we have pictured the array $\{A^{(8)}_{i,j}: 0 {\leq} i {\leq} 7 \ \& \ 
0 {\leq} j {\leq} i\}$. In general, if one 
looks at the first row of the $A^{(k)}_{i,j} = (qt)^j\qtn{2k{-}3j{-}i{-}2}$, 
which is the sequence 
$((qt)^j\qtn{2k{-}3j{-}2})$, the terms will be non-negative 
if $2k{-}2 \geq 3j$, or, equivalently, if $j \leq \lfloor (2k{-}2)/3 \rfloor$. 
We shall show that for any negative terms in the first 
row of the form $(qt)^j[-k]$, the first $k{+}1$ terms along the anti-diagonal 
starting at that position will sum to 0.  This will leave us 
only with positive terms corresponding to sum stated in the theorem. 
For example, in \fref{table}, one can easily compute 
that the sum of the first two terms of the anti-diagonal starting 
at the term $(qt)^5[-1]_{q,t}$ equals 0, the sum 
of the first five terms of the anti-diagonal starting 
at the term $(qt)^6[-4]_{q,t}$ equals 0, and the sum of 
the first eight terms of the anti-diagonal starting at the 
term $(qt)^7[-7]_{q,t}$ equals 0. These are the terms corresponding to the 
green, blue, and red diagonals respectively. In this case, 
we see that $g_\lambda[0,0,8]$ equals 
$$[8 \rightarrow 14]_{q,t}+ qt[6 \rightarrow 11]_{q,t} +
(qt)^2[5 \rightarrow 8]+(qt)^3[3 \rightarrow 5] + 
(qt)^4[2 \rightarrow 2] 
$$ 
which are exactly the terms predicted by the theorem.

\begin{figure}[H]
	\begin{center}
		\vskip -3mm
		\scalebox{1.5}{\includegraphics[width=3.0in]{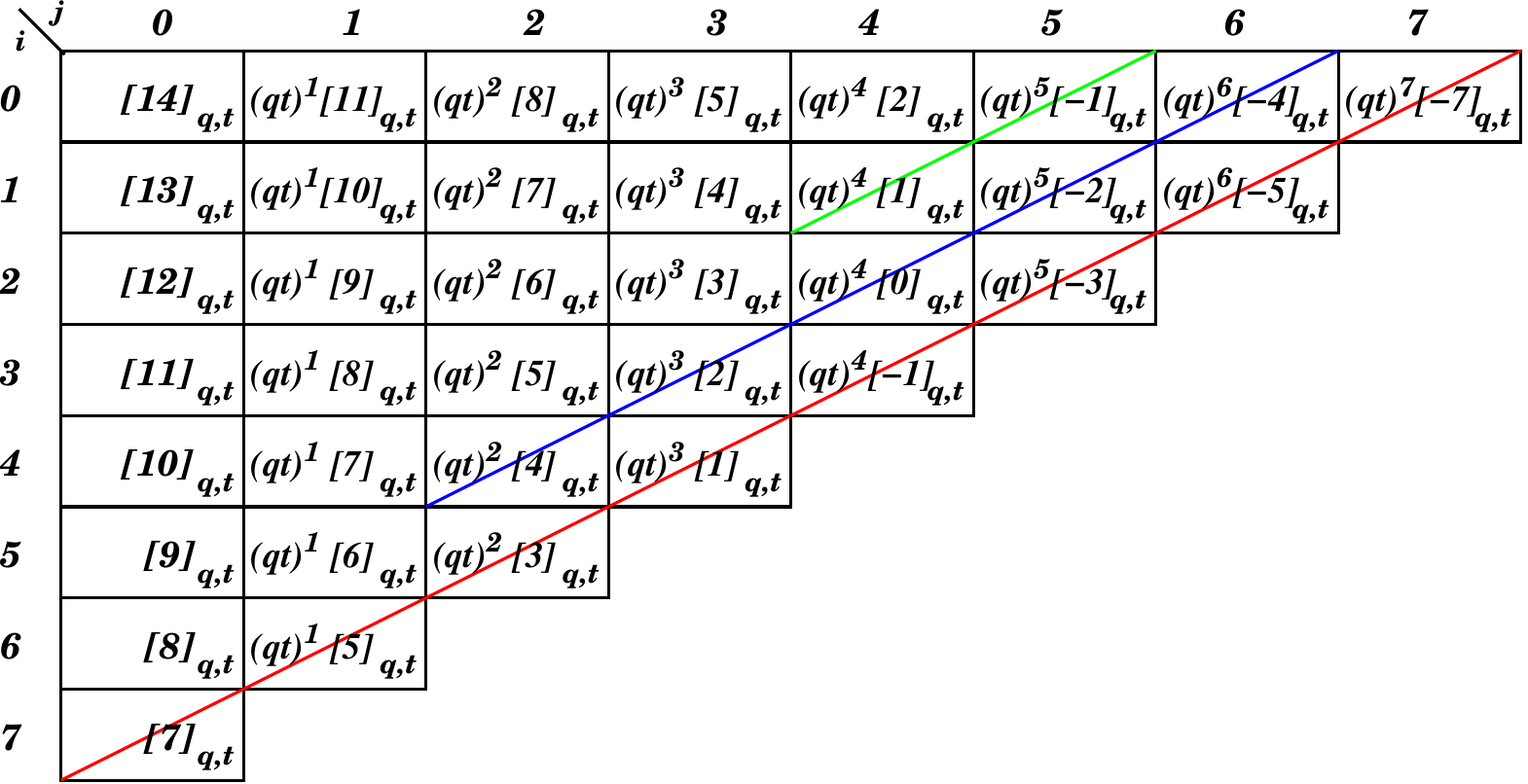}}
		\caption{The table of $A^{(8)}_{i,j}$.}
		\label{fig:table}
	\end{center}
\end{figure}

The proof requires a careful case by case analysis by 
considering the parity of $k$ modulo 3. 
Note that  
\begin{enumerate}
\item if $k =3t$, then $\lfloor (2k{-}2)/3 \rfloor =2t{-}1$, 
\item if $k =3t{+}1$, then $\lfloor (2k{-}2)/3 \rfloor =2t$, and 
\item if $k =3t{+}2$, then $\lfloor (2k{-}2)/3 \rfloor =2t$.
\end{enumerate}

\ \\
{\bf Case 1.}  $k =3t$.  \\

The negative terms in the first row are 
$$(qt)^{2t-1+s}[6t{-}2{-}3(2t{-}1{+}s)]_{q,t} = 
(qt)^{2t-1+s}[-3s{+}1]_{q,t}$$ 
for $s =1, \ldots, t$. In particular, the last term 
in the first row equals $(qt)^{3t-1}[-3t{+}1]$ and the first 
negative term is $A^{(3t)}_{0,2t} = (qt)^{2t}[-2]$.

Then we have two subcases depending 
on whether $s$ is even or odd. \\
\ \\
{\bf Subcase 1.1.} $s = 2r$. \\
\ \\
In this case, $A^{(3t)}_{0,2t-1+2r} = q^{2t+2r-1}[-6r{+}1]_{q,t}$. We claim that 
$\sum_{a=0}^{6r-1} A^{(3t)}_{a,2t-1+2r-a} =0$. 
We shall prove this by showing that for all $0 \leq a \leq 3r{-}1$, 
$$A^{(3t)}_{a,2t-1+2r-a} =- A^{(3t)}_{6r-1-a,2t-1+2r-(6r-1-a)} = 
-A^{(3t)}_{6r-1-a,2t-4r+a}.$$
Note that 
\begin{eqnarray*}
A^{(3t)}_{a,2t-1+2r-a} &=&(qt)^{2t-1+2r-a}[6t{-}2{-}a{-}3(2t{-}1{+}2r{-}a)]_{q,t}\\
&=&  (qt)^{2t-1+2r-a}[-6r{+}1{+}2a]_{q,t} \\
&=& - (qt)^{2t-1+2r-a-(6r-1-2a)}[6r{-}1{+}2a]_{q,t} \\
&=& - (qt)^{2t-4r+a}[6r{-}1{+}2a]_{q,t}.
\end{eqnarray*}
On the other hand, 
\begin{multline*}A^{(3t)}_{6r{-}1{-}a,2t{-}4r{+}a} = \\
(qt)^{2t-4r+a}[6t{-}2{-}(6r{-}1{-}a) {-}
3(2t{-}4r{+}a)]_{q,t}=
(qt)^{2t-4r+a}[6t{-}1{+}2a]_{q,t}
\end{multline*}
as desired.
\ \\

{\bf Subcase 1.2.} $s = 2r{+}1$.\\
\ \\
In this case, $A^{(3t)}_{0,2t-1+2r+1} = q^{2t+2r-1}[-6r{-}2]_{q,t}$. 
We  claim that 
$\sum_{a=0}^{6r+2} A^{(3t)}_{a,2t+2r-a} =0$. 
First note that 
\begin{multline*} A^{(3t)}_{3r+1,2t+2r-(3r+1)} = A^{(3t)}_{3r+1,2t-r-1} = \\
(qt)^{2t-r-1}[6t{-}2{-}(3r{+}1){-}3(2t{-}r{-}1)]_{q,t} = (qt)^{2t-r-1}[0]_{q,t} =0.
\end{multline*}
Thus we can prove our claim if we show  that $0 \leq a \leq 3r$, 
$$A^{(3t)}_{a,2t+2r-a} =- A^{(3t)}_{6r+2-a,2t+2r-(6r+2-a)} = 
-A^{(3t)}_{6r+2-a,2t-4r-2+a}.$$
Note that 
\begin{eqnarray*}
A^{(3t)}_{a,2t+2r-a} &=&(qt)^{2t+2r-a}[6t{-}2{-}a{-}3(2t{+}2r{-}a)]_{q,t}\\
&=&  (qt)^{2t+2r-a}[-6r{-}2{+}2a]_{q,t} \\
&=& - (qt)^{2t+2r-a-(6r+2-2a)}[6r{+}2{-}2a]_{q,t} \\
&=& - (qt)^{2t-4r-2+a}[6r{+}2{-}2a]_{q,t}.
\end{eqnarray*}
On the other hand, 
\begin{align*} A^{(3t)}_{6r+2-a,2t-4r-2+a} &= (qt)^{2t-4r-2+a}[6t{-}2{-}(6r{+}2{-}a) {-}
3(2t{-}4r{-}2{+}a)]_{q,t}\\
&=(qt)^{2t{-}4r{+}a}[6r{+}2{-}2a]_{q,t}.
\end{align*}

Observe that the bottom term of the $r$-th column of the array 
$\{A^{(3t)}_{i,j}\}_{i=0, \ldots, 3t{-}1 \, \& \,0 \leq j \leq i}$ 
is $A_{3t-1-r,r}$. Our computations above show that 
in the array $\{A^{(3t)}_{i,j}\}_{i=0, \ldots, 3t{-}1 \, \& \, 0 \leq j \leq i}$, 
 the first $3s$ terms of any anti-diagonal starting at $A_{0,2t-1+s}$ sum to 0 for $s =1, \ldots, t$. 
This means 
that the corresponding terms in the array make no contribution to 
$g_{\lambda}[0,0,k]$. It follows that we can ignore all the terms 
in columns $2t, \ldots, 3t{-}1$.  Note that the first $3t$ terms 
of the anti-diagonal starting at $A^{(3t)}_{0,3t-1}$ cancel out the bottom 
term in each column. Next  the first $3t-3$ terms 
of the anti-diagonal starting at $A^{(3t)}_{0,3t-2}$ reach only to column 2 
so they will cancel out the next to last term in columns  
$2, \ldots, 2t{-}1$. Then  the first $3t{-}6$ terms 
of the anti-diagonal starting at $A^{(3t)}_{0,3t-3}$ reach only to column 4 
so they will cancel out the second to last terms in columns  
$4, \ldots, 2t{-}1$.  Continuing on in this way, we finally see 
that the 3 anti-diagonal terms starting at $A^{(3t)}_{0,2t}$ will only cancel 
out terms in columns $2t{-}2$ and $2t{-}1$.  It follows that 
for $r =0, \ldots, t{-}1$, we can ignore that last $r{+}1$ terms 
in columns $2r$ and $2r{+}1$. 
This means that if $0 \leq r \leq t{-}1$, the lowest  term 
that can contribute to  
$g_{\lambda}[0,0,k]$ in column $2r$ is  
\begin{eqnarray*}
A^{(3t)}_{3t-1 -2r -(r+1),2r} &=& A_{3t-3r-2,2r}= (qt)^{2r}[6t{-}2{-}(3t{-}3r{-}2){-}3(2r)]_{q,t} \\
&=& (qt)^{2r}[3t{-}3r]_{q,t} = [3t{-}(2r) {-}\lfloor 2r{+}1/2 \rfloor]_{q,t}.
\end{eqnarray*}
Note that the top element in column $2r$ is $A^{(3t)}_{0,2r} = 
(qt)^{2r}[3t{-}2{-}3(2r)]_{q,t}$. 
Since the $q,t$-numbers of the terms in column $2r$ increase by 1 as 
one moves up, it follows that the contribution of 
column $2r$  to $g_{\lambda}[0,0,k]$ is 
$(qt)^{2r}[k{-}(2r) {-}\lfloor 2r{+}1/2\rfloor \rightarrow 2k{-}2{-}3(2r)]_{q,t}$ as predicted by 
our formula. 

Similarly, if $0 \leq r \leq t-1$, the lowest  term 
that can contribute to  
$g_{\lambda}[0,0,k]$ in column $2r{+}1$ is  
\begin{eqnarray*}
A^{(3t)}_{3t-1 -(2r+1) -(r+1),2r+1} &=& A_{3t-3r-3,2r+1}= (qt)^{2r+1}[6t{-}2{-}(3t{-}3r{-}3){-}3(2r{+}1)]_{q,t} \\
&=& (qt)^{2r+1}[3t{-}3r{-}2]_{q,t} = [3t{-}(2r{+}1) {-}\lfloor 2r{+}2/2\rfloor]_{q,t}.
\end{eqnarray*}
Note that the top element in column $2r{+}1$ is 
$A^{(3t)}_{0,2r+1}= (qt)^{2r+1}[3t{-}2{-}3(2r{+}1)]_{q,t}$. 
Since the $q,t$-numbers in the terms in column $2r{+}1$ increase by 1 as 
one moves up, it follows that the contribution of 
column $2r{+}1$  to $g_{\lambda}[0,0,k]$ is 
$(qt)^{2r+1}[k{-}(2r{+}1) {-}\lfloor 2r{+}1/2 \rfloor \rightarrow 2k{-}2{-}3(2r{+}1)]_{q,t}$ as predicted by 
our formula.

Thus our formula holds in this case. \\
\ \\
{\bf Case 2.}  $k =3t{+}1$.  \\

The negative terms in the first row are 
$$(qt)^{2t+s}[6t{+}2{-}2{-}3(2t{+}s)]_{q,t} = 
(qt)^{2t+s}[-3s]_{q,t}$$ 
for $s =1, \ldots, t$. In particular, the last term 
in the first row equals $(qt)^{3t}[-3t]$ and the first 
negative term is $A^{(3t+1)}_{0,2t+1} = (qt)^{2t+1}[-3]$.

Then as in Case 1, we have two subcases depending 
on whether $s$ is even or odd. \\
\ \\
{\bf Subcase 2.1.} $s = 2r$. \\
\ \\
In this case, $A^{(3t+1)}_{0,2t+2r} = q^{2t+2r}[-6r]_{q,t}$. We claim that 
$\sum_{a=0}^{6r} A^{(3t)}_{a,2t-1+2r-a} =0$. 
First observe that 
$$A^{(3t+1)}_{3r,2t+2r-(3r)} = q^{2t-r}[6t{+}2{-}2{-}3r{-}3(2t{-}r)]_{q,t}  =q^{2t-r}
[0]_{q,t}.$$
Thus we can prove our claim by showing that for  $0 \leq a \leq 3r-1$, 
$$A^{(3t+1)}_{a,2t+2r-a} =- A^{(3t+1)}_{6r-a,2t+2r-(6r-a)}.$$
This is a straightforward calculation so we will not include the details here. \\
\ \\
{\bf Subcase 2.2.} $s = 2r{+}1$.\\
\ \\
In this case, $A^{(3t+1)}_{0,2t+2r+1} = q^{2t+2r+1}[-6r{-}3]_{q,t}$. 
We  claim that 
$\sum_{a=0}^{6r+3} A^{(3t+1)}_{a,2t+2r+1-a} =0$. 
In this case, one can easily check  that $0 \leq a \leq 3r{+}1$, 
$$A^{(3t+1)}_{a,2t+2r+1-a} =- A^{(3t+1)}_{6r+3-a,2t+2r+1-(6r+3-a)}$$
so we shall not include the details here. \\
\ \\

Next observe that the bottom term of the array 
$\{A^{(3t+1)}_{i,j}\}_{i=0, \ldots, 3t \ \& \ 0 \leq j \leq i}$
in the $r$-th column 
is $A_{3t-r,r}$. Our computations above show that 
in the array $\{A^{(3t+1)}_{i,j}\}_{i=0, \ldots, 3t \ \& \ 0 \leq j \leq i}$, 
 the first $3s{+}1$ terms of any anti-diagonal 
terms starting at $A^{(3t+1)}_{0,2t+s}$ sum to 0 for $s =1, \ldots, t$. 
This means 
that the corresponding terms in the array make no contribution to 
$g_{\lambda}[0,0,k]$. It follows that we can ignore all the terms 
in columns $2t{+}1, \ldots, 3t$.   One can use a similar reasoning 
as we used in Case 1 to show that for $r =0, \ldots, t{-}1$, we can ignore the 
bottom $r{+}1$ terms 
in columns $2r$ and $2r{+}1$. Moreover, we can ignore the bottom $t$ terms in 
column $2t$.   This is because $A^{(3t+1)}_{0,2t+1} = [-3]$, which 
means that the first four terms of the anti-diagonal starting at 
$A^{(3t+1)}_{0,2t+1}$ will cancel terms in columns $2t{-}2$, $2t{-}1$, and 
$2t$. It follows that if $0 \leq r \leq t{-}1$, the lowest  term 
that can contribute to  
$g_{\lambda}[0,0,k]$ in column $2r$ is  
\begin{eqnarray*}
A^{(3t+1)}_{3t -2r -(r+1),2r} &=& A^{(3t+1)}_{3t-3r-1,2r}= (qt)^{2r}[6t{+}2{-}2{-}(3t{-}3r{-}1){-}3(2r)]_{q,t} \\
&=& (qt)^{2r}[3t{-}3r{+}1]_{q,t} = [3t{+}1{-}(2r) {-}\lfloor 2r{+}1/2 \rfloor]_{q,t}.
\end{eqnarray*}
Note that the top element in column $2r$ is $A^{(3t+1)}_{0,2r} = 
(qt)^{2r}[2(3t{+}1){-}2{-}3(2r)]_{q,t}$. 
Since the $q,t$-numbers in the terms in column $2r$ increase by 1 as 
one moves up, it follows that the contribution of 
column $2r$  to $g_{\lambda}[0,0,k]$ is 
$(qt)^{2r}[k{-}(2r) {-}\lfloor 2r{+}1/2\rfloor \rightarrow 2k{-}2{-}3(2r)]_{q,t}$ as predicted by 
our formula.

Similarly, if $0 \leq r \leq t{-}1$, the lowest  term 
that can contribute to  
$g_{\lambda}[0,0,k]$ in column $2r+1$ is  
\begin{eqnarray*}
A^{(3t+1)}_{3t -(2r+1) -(r+1),2r+1} &=& A^{(3t+1)}_{3t-3r-2,2r+1}= 
(qt)^{2r+1}[6t{+}2{-}2{-}(3t{-}3r{-}2){-}3(2r{+}1)]_{q,t} \\
&=& (qt)^{2r+1}[3t{-}3r{-}1]_{q,t} = [(3t{+}1){-}(2r{+}1) {-}\lfloor 2r{+}2/2\rfloor]_{q,t}.
\end{eqnarray*}
Note that the top element in column $2r{+}1$ is 
$A^{(3t+1)}_{0,2r+1}= (qt)^{2r+1}[2k{-}2{-}3(2r{+}1)]_{q,t}$. 
Since the $q,t$-numbers in the terms in column $2r{+}1$ increase by 1 as 
one moves up, it follows that the contribution of 
column $2r{+}1$  to $g_{\lambda}[0,0,k]$ is 
$(qt)^{2r+1}[k{-}(2r{+}1) {-}\lfloor 2r{+}1/2 \rfloor \rightarrow 2k{-}2{-}3(2r{+}1)]_{q,t}$ as predicted by 
our formula.

Finally in column $2t$, the lowest term that can 
contribute to $g_{\lambda}[0,0,k]$ is  
\begin{eqnarray*}
A^{(3t+1)}_{3t -(2t) -(t),2r+1} &=& A^{(3t+1)}_{0,2t}= 
(qt)^{2t}[6t{+}2{-}2{-}3(2t)]_{q,t} \\
&=& (qt)^{2t}[0]_{q,t}.
\end{eqnarray*}
Thus this column makes no contribution which is why 
we exclude this term from the sum. Note that 
in this case $3t{+}1{-}2t {-}\lfloor 2t{+}1 \rfloor =1$ while 
$2k{-}2{-}3(2t) = 6t{+}2{-}2{-}6t=0$ so that 
$[k{-}2t{-}\lfloor 2t{+}1 \rfloor  \rightarrow 2k{-}2{-}3(6t)] =[1 \rightarrow 0]$ 
which is an empty sum. 

Thus our formula holds in this case. \\
\ \\
{\bf Case 3.}  $k =3t{+}2$.  \\

Then the negative terms in the first row are 
$$(qt)^{2t+s}[6t{+}4{-}2{-}3(2t{+}s)]_{q,t} = 
(qt)^{2t+s}[-3s{+}2]_{q,t}$$ 
for $s =1, \ldots, t{+}1$. In particular, the last term 
in the first row equals $(qt)^{3t+1}[-3t{-}1]$ and the first 
negative term is $A^{(3t+2)}_{0,2t+1} = (qt)^{2t+1}[-1]$.

Then as before, we have two subcases depending 
on whether $s$ is even or odd. \\
\ \\
{\bf Subcase 3.1.} $s = 2r$. \\
\ \\
In this case, $A^{(3t+2)}_{0,2t+2r} = q^{2t+2r}[-6r{+}2]_{q,t}$. We claim that 
$\sum_{a=0}^{6r-2} A^{(3t)}_{a,2t-1+2r-a} =0$. 
First observe that 
$$A^{(3t+2)}_{3r-1,2t+2r-(3r-1)} = q^{2t-r+1}[6t{+}4{-}2{-}(3r{-}1){-}3(2t{-}r{+}1)]_{q,t}  
=q^{2t-r+1}
[0]_{q,t}.$$
Thus we can prove our claim by showing that for  $0 \leq a \leq 3r{-}2$, 
$$A^{(3t+2)}_{a,2t+2r-a} =- A^{(3t+2)}_{6r-2-a,2t+2r-(6r-2-a)}.$$
This is a straightforward calculation so we will not include the details here. \\
\ \\
{\bf Subcase 3.2.} $s = 2r{+}1$.\\
\ \\
In this case, $A^{(3t+2)}_{0,2t+2r+1} = q^{2t+2r+1}[-6r{-}1]_{q,t}$. 
We  claim that 
$\sum_{a=0}^{6r+1} A^{(3t+2)}_{a,2t+2r+1-a} =0$. 
In this case, one can easily check that for $0 \leq a \leq 3r$, 
$$A^{(3t+1)}_{a,2t+2r+1-a} =- A^{(3t+1)}_{6r+1-a,2t+2r+1-(6r+1-a)}$$
so we shall not include the details here. \\
\ \\

Next we observe that the bottom term of the array 
$\{A^{(3t+2)}_{i,j}\}_{i=0, \ldots, 3t \ \& \ 0 \leq j \leq i}$
in the $r$-th column 
is $A_{3t+1-r,r}$. Our computations above have shown that 
in the array $\{A^{(3t+2)}_{i,j}\}_{i=0, \ldots, 3t+1 \ \& \ 0 \leq j \leq i}$, 
 the first $3s-1$ terms of any anti-diagonal starting at $A^{(3t+2)}_{0,2t+s}$ sum to 0 for $s =1, \ldots, t{+}1$. 
This means 
that the corresponding terms in the array make no contribution to 
$g_{\lambda}[0,0,k]$. It follows that we can ignore all the terms 
in columns $2t{+}1, \ldots, 3t{+}1$.   One can use a similar reasoning 
as we used in Case 1 to show that for $r =0, \ldots, t{-}1$, we can ignore the 
bottom $r{+}1$ terms 
in columns $2r$ and $2r{+}1$. We can also ignore the bottom $t{+}1$ terms in 
column $2t$.   This is because $A^{(3t+2)}_{0,2t+1} = (qt)^{2t+1}[-1]$ 
so that the sum of first two anti-diagonal terms starting at  
$A^{(3t+2)}_{0,2t+1}$ will only cancel elements in columns $2t$ and $2t{+}1$.

This means that if $0 \leq r \leq t{-}1$, the lowest  term 
that can contribute to  
$g_{\lambda}[0,0,k]$ in column $2r$ is  
\begin{eqnarray*}
A^{(3t+2)}_{3t+1 -2r -(r+1),2r} &=& A^{(3t+2)}_{3t-3r,2r}= (qt)^{2r}[6t{+}4{-}2{-}(3t{-}3r){-}3(2r)]_{q,t} \\
&=& (qt)^{2r}[3t{-}3r{+}2]_{q,t} = [3t{+}2{-}(2r) {-}\lfloor 2r{+}1/2 \rfloor]_{q,t}.
\end{eqnarray*}
Note that the top element in column $2r$ is $A^{(3t+2)}_{0,2r} = 
(qt)^{2r}[2(3t{+}2){-}2{-}3(2r)]_{q,t}$. 
Since the $q,t$-numbers in the terms in column $2r$ increase by 1 as 
one moves up, it follows that the contribution of 
column $2r$  to $g_{\lambda}[0,0,k]$ is 
$(qt)^{2r}[k{-}(2r) {-}\lfloor 2r{+}1/2\rfloor \rightarrow 2k{-}2{-}3(2r)]_{q,t}$ as predicted by 
our formula.

Similarly, if $0 \leq r \leq t{-}1$, the lowest  term 
that can contribute to  
$g_{\lambda}[0,0,k]$ in column $2r{+}1$ is  
\begin{eqnarray*}
A^{(3t+2)}_{3t+1 -(2r+1) -(r+1),2r+1} &=& A^{(3t+2)}_{3t-3r-1,2r+1}= 
(qt)^{2r+1}[6t{+}4{-}2{-}(3t{-}3r{-}1){-}3(2r{+}1)]_{q,t} \\
&=& (qt)^{2r+1}[3t{-}3r]_{q,t} = [(3t{+}2){-}(2r{+}1) {-}\lfloor 2r{+}2/2\rfloor]_{q,t}.
\end{eqnarray*}
Note that the top element in column $2r{+}1$ is 
$A^{(3t+2)}_{0,2r+1}= (qt)^{2r+1}[2(3t{+}2){-}2{-}3(2r{+}1)]_{q,t}$. 
Since the $q,t$-numbers in the terms in column $2r{+}1$ increase by 1 as 
one moves up, it follows that the contribution of 
column $2r{+}1$  to $g_{\lambda}[0,0,k]$ is 
$(qt)^{2r+1}[k{-}(2r{+}1) {-}\lfloor 2r{+}1/2 \rfloor \rightarrow 2k{-}2{-}3(2r{+}1)]_{q,t}$ as predicted by 
our formula.

Finally for column $2t$, the lowest term that can 
contribute to $g_{\lambda}[0,0,k]$ in column $2t$ is  
\begin{eqnarray*}
A^{(3t+2)}_{3t+1 -(2t) -(t+1),2t} &=& A^{(3t+2)}_{0,2t}= 
(qt)^{2t}[6t{+}4{-}2{-}3(2t)]_{q,t} \\
&=& (qt)^{2t}[2]_{q,t} = [(3t{+}2){-}(2t) {-}\lfloor 2t{+}1/2\rfloor]_{q,t}.
\end{eqnarray*}
It follows that the contribution of 
column $2t$  to $g_{\lambda}[0,0,k]$ is 
$(qt)^{2t}[k{-}(2t) {-}\lfloor 2t{+}1/2 \rfloor \rightarrow 2k{-}2{-}3(2t)]_{q,t} =
(qt)^{2t}[2]$ as predicted by 
our formula.

Thus our formula holds in this case which completes our proof.
\end{proof}

For example, we have
	\begin{eqnarray*}
		g_{\lambda}[0,0,12]&=&\sum_{i=0}^{7}(qt)^i\Big[ \textstyle
12 {-}i {-} \lfloor \frac{i{+}1}{2} \rfloor \rightarrow 22{-}3i \Big]_{q,t}\\
		&=&\qtn{12\rightarrow 22}+(qt)\qtn{10\rightarrow 19}+(qt)^2\qtn{9\rightarrow 16}+(qt)^3\qtn{7\rightarrow 13}\\
		& &+(qt)^4\qtn{6\rightarrow 10}+(qt)^5\qtn{4\rightarrow 7}+(qt)^6\qtn{3\rightarrow 4}+(qt)^7\qtn{1}.
	\end{eqnarray*}
	
	\subsubsection{A formula for $g_{\lambda}[a,0,k]$}
	
	\begin{theo}\label{Dun2}
		$$
		g_{\lambda}[a,0,k]=(qt)^ag_{\lambda}[0,0,k]+\sum_{i=1}^{a}(qt)^{a-i}\qtn{k{+}3i\rightarrow 2k{+}3i}.
		$$
	\end{theo}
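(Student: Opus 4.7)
The plan is to proceed by induction on $a$, aiming to establish a one-step recurrence of the form
\[
g_{\lambda}[a,0,k] \;=\; (qt)\, g_{\lambda}[a-1,0,k] \;+\; \qtn{\alpha\rightarrow \beta},
\]
where the increment is a single consecutive range of $q,t$-numbers matching the $i = a$ summand in the theorem, so that telescoping from $a$ down to the base case $a = 0$ (where both sides trivially reduce to $g_\lambda[0,0,k]$) reproduces the claimed closed form immediately.

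First I would identify $g_{\lambda}[a,0,k]$ as an explicit sum. For any split $a_1 + a_2 = a$, a tableau in $S_{\lambda}[a_1, 0, a_2, k]$ carries $a$ ones and $a$ twos from its $a_1$- and $a_2$-parts combined; the $k_2$-part then contributes $i$ ones and $k-i$ twos for some $0 \le i \le k$ (the $a_0$-part, if present, contributes only zeros and is invisible in $W$). Hence
\[
g_{\lambda}[a, 0, k] \;=\; \sum_{i=0}^{k} w(a+i,\, a+k-i).
\]

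The engine driving the recurrence is the expansion
\[
w(\omega_1, \omega_2) \;=\; \sum_{j=0}^{\omega_2 - 1} (qt)^j \,\qtn{\omega_1 + 2\omega_2 - 2 - 3j},
\]
obtained by writing $[\omega_2]_{t^2,q}$ and $[\omega_2]_{q^2,t}$ out term by term and dividing by $t-q$, exactly as already done in the proof of \tref{Dun1}. The crucial feature is that replacing $a$ by $a-1$ in $w(a+i,a+k-i)$ shifts the argument of every $\qtn{\cdot}$ down by $3$, while the factor $qt$ shifts the $j$-index up by $1$; these two shifts cancel term-by-term for $j \ge 1$, leaving only the $j=0$ contribution:
\[
w(a+i,\,a+k-i) \;-\; (qt)\, w(a-1+i,\,a-1+k-i) \;=\; \qtn{3a + 2k - i - 2}.
\]
Summing this identity over $i = 0, 1, \ldots, k$ collapses a string of individual $q,t$-numbers (with consecutive arguments) into a single range $\qtn{\,\cdot\rightarrow\cdot\,}$, producing the incremental term of the recurrence, after which telescoping delivers the desired closed form.

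I expect the main obstacle to be careful index bookkeeping rather than any deep idea: one must confirm that the boundary case $i = k$ is correctly included (unlike in \tref{Dun1}, where $w(k,0) = 0$ made it innocuous, here for $a \ge 1$ the term $w(a+k, a)$ is nonzero and genuinely contributes), and line up the collapsed sum $\sum_{i=0}^k \qtn{3a + 2k - i - 2}$ with the $\qtn{k+3i \rightarrow 2k+3i}$ form asserted in the statement via the substitution $i \mapsto k - i$.
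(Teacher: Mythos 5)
Your proof is correct and is essentially the paper's own argument in telescoped form: the paper peels off all $a$ factors of $qt$ at once via the splitting $[a{+}k{-}i]_{t^2,q}=q^a[k{-}i]_{t^2,q}+\sum_{j=0}^{a-1}t^{2(k-i+a-j-1)}q^j$ (and its $q\leftrightarrow t$ partner), which is exactly your one-step recurrence iterated $a$ times; your identity $w(a{+}i,a{+}k{-}i)-(qt)\,w(a{-}1{+}i,a{-}1{+}k{-}i)=\qtn{3a{+}2k{-}i{-}2}$, including the boundary check at $i=k$, is sound.

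One caveat: the final bookkeeping will \emph{not} line up with the statement as printed. Summing $\qtn{3a{+}2k{-}i{-}2}$ over $0\le i\le k$ gives the increment $\qtn{k{+}3a{-}2\rightarrow 2k{+}3a{-}2}$, so telescoping yields
$g_{\lambda}[a,0,k]=(qt)^a g_{\lambda}[0,0,k]+\sum_{i=1}^{a}(qt)^{a-i}\qtn{k{+}3i{-}2\rightarrow 2k{+}3i{-}2}$,
with both endpoints shifted down by $2$ relative to the displayed theorem. This is not an error on your part: the last line of the paper's own proof is exactly this $-2$-shifted formula, and the worked examples confirm it (e.g.\ the printed statement would give $g_\lambda[1,0,0]=\qtn{3}$, whereas $g_\lambda[1,0,0]=w(1,1)=[1]_{q,t}$, which is what the shifted formula produces). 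So the displayed theorem has a typo in the range of the $q,t$-analog, and your derivation proves the corrected statement; just do not expect the substitution $i\mapsto k-i$ to reconcile your sum with $\qtn{k{+}3i\rightarrow 2k{+}3i}$ as written.
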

	
	\begin{proof}
	We have
		\begin{eqnarray*}
			g_{\lambda}[a,0,k]&=&\sum_{T\in S[a,0,0,k]}g_T\\
			&=&\sum_{i=0}^{k}w(a{+}i,a{+}k{-}i)\\
			&=&\sum_{i=0}^{k}\frac{t^{a+i}[a{+}k{-}i]_{t^2,q}-q^{a+i}[a{+}k{-}i]_{q^2,t}}{t{-}q}.\\
		\end{eqnarray*}
		
		Notice that	
		$$
		[a{+}k{-}i]_{t^2,q}=q^a[k{-}i]_{t^2,q}+\sum_{j=0}^{a-1}t^{2(k{-}i{+}a{-}j{-}1)}q^j
		\vspace{-3mm}
		$$
		and
		$$
		\vspace{-3mm}
		[a+k-i]_{q^2,t}=t^a[k{-}i]_{q^2,t}+\sum_{j=0}^{a-1}q^{2(k{-}i{+}a{-}j{-}1)}t^j.
		$$	
		By plugging in these new equations we can get
		\begin{eqnarray*}
			g_{\lambda}[a,0,k]&=&t^aq^a\sum_{i=0}^{k}\frac{t^{i}[k{-}i]_{t^2,q}-q^{i}[k{-}i]_{q^2,t}}{t{-}q}\\
			& &+\sum_{i=0}^k \sum_{j=0}^{a-1}(qt)^j \frac{t^{2k-i+3a-3j-2}{-}q^{2k-i+3a-3j-2}}{t{-}q}\\
			&=&(qt)^ag_{\lambda}[0,0,k]+\sum_{j=0}^{a-1}(qt)^j\sum_{i=0}^{k}\qtn{2k{-}i{+}3a{-}3j{-}2}\\
			&=&(qt)^ag_{\lambda}[0,0,k]+\sum_{i=0}^{a-1}(qt)^i\qtn{k{+}3a{-}3i{-}2\rightarrow 2k{+}3a{-}3i{-}2}\\
			&=&(qt)^ag_{\lambda}[0,0,k]+\sum_{i=1}^{a}(qt)^{a-i}\qtn{k{+}3i{-}2\rightarrow 2k{+}3i{-}2}\ .\qedhere
		\end{eqnarray*}\end{proof}
		
		\subsubsection{ The computation of $g_{\lambda}[a,k_1,k_2]$}
		
		We still need to add $k_1$ to complete the formula. Note that the function $g_{\lambda}[a,k_1,k_2]=g_{\lambda}[a,k_2,k_1]$. So without loss of generality, we suppose $k_1\leq k_2$.
		\begin{theo}\label{Dun3}
			For $k_1\leq k_2$, we have
			$$
			g_{\lambda}[a,k_1,k_2]=\sum_{i=0}^{k_1}g_{\lambda}[a{+}i,0,k_1{+}k_2{-}2i].
			$$
		\end{theo}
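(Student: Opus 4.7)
The plan is to prove the identity by expanding both sides as weighted sums of $w(\omega_1,\omega_2)$, grouping terms by the value of $(\omega_1,\omega_2)$, and comparing multiplicities on each side. The key point is that $W(T)=w(\omega_1(T),\omega_2(T))$ depends only on the total counts of $1$'s and $2$'s in $T$, so weight sums over structured families of tableaux reduce to lattice-point counts.

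First I would expand the left-hand side. By the SSYT condition, the $k_1$ two-row columns with $0$ on the bottom have tops reading as the weakly increasing word $1^p 2^{k_1-p}$ for some $p\in\{0,1,\dots,k_1\}$, and the $k_2$ single cells read as $1^q 2^{k_2-q}$ for some $q\in\{0,1,\dots,k_2\}$. The $a_1$ three-row columns and $a_2$ two-row columns with $1$ on the bottom each contribute exactly one $1$ and one $2$, for a total of $a=a_1+a_2$ of each. Thus $\omega_1=a+p+q$ and $\omega_2=a+k_1+k_2-p-q$, so
\begin{align*}
g_\lambda[a,k_1,k_2]=\sum_{p=0}^{k_1}\sum_{q=0}^{k_2}w\bigl(a+p+q,\ a+k_1+k_2-p-q\bigr).
\end{align*}
By the same argument applied to each summand on the right (as already computed in the proof of \tref{Dun2}), the right-hand side of the claimed identity equals
\begin{align*}
\sum_{i=0}^{k_1}\sum_{j=0}^{k_1+k_2-2i}w\bigl(a+i+j,\ a-i+k_1+k_2-j\bigr).
\end{align*}

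Both sums consist of terms of the form $w(a+m,\ a+k_1+k_2-m)$ for various $m\in\{0,\dots,k_1+k_2\}$, so the identity reduces to checking that the multiplicity of each such term matches on both sides. On the LHS, the multiplicity is $\#\{(p,q):p+q=m,\ 0\le p\le k_1,\ 0\le q\le k_2\}$; on the RHS, writing $m=i+j$, it is $\#\{(i,j):i+j=m,\ 0\le i\le k_1,\ 0\le j\le k_1+k_2-2i\}$. Rewriting the last constraint as $i\le k_1+k_2-m$ gives the RHS count as $\min(k_1,m,k_1+k_2-m)+1$.

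The final and only substantive step is to show that the LHS count also equals $\min(k_1,m,k_1+k_2-m)+1$. This is where the hypothesis $k_1\le k_2$ is essential: it ensures that the $q\le k_2$ constraint on the LHS is never more binding than the combined $p\le k_1$ and $p+q=m$ constraints, so the $k_2$ never enters the minimum. A straightforward three-case split on whether $m\le k_1$, $k_1<m\le k_2$, or $k_2<m\le k_1+k_2$ verifies this uniformly. I do not foresee any real obstacle; the computation is elementary interval arithmetic on integer points in a rectangle, and the only clever observation is recognizing that the triangular constraint on the RHS $(i,j)$ pairs encodes exactly the same three-way minimum as the rectangular constraint on the LHS $(p,q)$ pairs.
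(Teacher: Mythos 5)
Your proposal is correct and takes essentially the same approach as the paper: both sides are expanded as double sums of $w(\omega_1,\omega_2)$ over the possible counts of $1$'s, and the identity reduces to the re-indexing $\sum_{j=0}^{k_1}\sum_{i=0}^{k_2}=\sum_{i=0}^{k_1}\sum_{j=0}^{k_1+k_2-2i}$ of the same summand. The paper asserts that re-indexing in a single line, whereas you justify it by checking that both regions contain $\min(k_1,m,k_1+k_2-m)+1$ lattice points on each anti-diagonal $i+j=m$; that verification, and your observation of exactly where $k_1\le k_2$ is needed, are correct.
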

		
		\begin{proof}
			\begin{eqnarray*}
				g_{\lambda}[a,k_1,k_2]&=&\sum_{T\in S[a,k_1,0,k_2]}g_T\\
				&=&\sum_{j=0}^{k_1}\sum_{i=0}^{k_2}w(a{+}i{+}j,a{+}k_1{+}k_2{-}i{-}j)\\
				&=&\sum_{i=0}^{k_1}\sum_{j=0}^{k_1+k_2-2i}w(a{+}i{+}j,a{+}k_1{+}k_2{-}i{-}j)\\
				&=&\sum_{i=0}^{k_1}g_{\lambda}[a+i,0,k_1+k_2-2i].\qedhere
			\end{eqnarray*}
		\end{proof}
		
		\subsection{Formula for $g_{\lambda}$}
		For any $\lambda=(3^a 2^b1^c)$, $\lambda'$ has the shape
		\raisebox{-11pt}{\begin{tikzpicture}[scale =.2]
			\draw[help lines] (0,0) rectangle (3,3);
			\draw[help lines] (0,0) rectangle (6,2);
			\draw[help lines] (0,0) rectangle (9,1);
			
			\draw [thick, blue,decorate,decoration={brace,amplitude=5pt,mirror},xshift=0.4pt,yshift=-0.4pt](0,0) -- (3,0) node[black,midway,yshift=-.4cm] {\footnotesize $a$};
			\draw [thick, blue,decorate,decoration={brace,amplitude=5pt,mirror},xshift=0.4pt,yshift=-0.4pt](3,0) -- (6,0) node[black,midway,yshift=-.4cm] {\footnotesize $b$};
			\draw [thick, blue,decorate,decoration={brace,amplitude=5pt,mirror},xshift=0.4pt,yshift=-0.4pt](6,0) -- (9,0) node[black,midway,yshift=-.4cm] {\footnotesize $c$};
			
			\end{tikzpicture}}\ . We can then write the formula of $g_{\lambda}$ in terms of $g_{\lambda}[x,y,z]$.
		
		\begin{theo}\label{Dun4} Let $\lambda = (3^a2^b1^c)$. Then 
			$$
			g_{\lambda}=\sum_{i=0}^{b}g_{\lambda}[a{+}i,b{-}i,c]+\sum_{i=1}^{c}g_{\lambda}[a,b,c{-}i].
			$$
		\end{theo}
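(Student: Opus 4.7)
The plan is to partition $\mathrm{SSYT}(\lambda',012)$ according to the structural parameters $(a_1,k_1,a_2,a_0,k_2)$ introduced in \fref{1} and then sum the contribution $W(T)$ over each piece using the already-defined aggregates $g_\lambda[a_1{+}a_2,k_1,k_2]$. First I would observe that since $\lambda'=(3^a,2^b,1^c)$ has exactly $a$ columns of height $3$, $b$ columns of height $2$, and $c$ columns of height $1$, and since in an SSYT with entries in $\{0,1,2\}$ every height-$3$ column is forced to be $(0,1,2)$ from bottom to top, we always have $a_1=a$. Every remaining degree of freedom is determined by the bottom row, which must be weakly increasing: there are $a_2$ height-$2$ columns whose bottom is $1$ (so $k_1=b-a_2$, and the tops of these are forced to be $2$), and $a_0$ height-$1$ columns whose entry is $0$ (so $k_2=c-a_0$, and the remaining height-$1$ entries lie in $\{1,2\}$).

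Next I would exploit the compatibility constraint that the bottom row imposes across the boundary between height-$2$ and height-$1$ columns: if $a_2\geq 1$ then the last height-$2$ bottom equals $1$, which forces the first height-$1$ entry to be $\geq 1$ and hence $a_0=0$; conversely, if $a_2=0$ then $a_0$ may be anything in $\{0,1,\ldots,c\}$. This yields a disjoint decomposition
\[
\mathrm{SSYT}(\lambda',012)=\bigsqcup_{i=0}^{b} S_\lambda[a,b-i,i,c]\ \sqcup\ \bigsqcup_{i=1}^{c} S_\lambda[a,b,0,c-i],
\]
where the first union collects all configurations with $a_0=0$ and $a_2=i$, while the second collects those with $a_2=0$ and $a_0=i\geq 1$. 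Applying $g_\lambda[a_1{+}a_2,k_1,k_2]=g_\lambda[a{+}a_2,k_1,k_2]$ to each piece gives precisely the two sums in the statement.

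Finally I would verify that the decomposition is both exhaustive and non-redundant: the unique configuration with $a_2=0$ and $a_0=0$ is counted once in the first sum at $i=0$, and omitting $i=0$ from the second sum avoids double-counting, while every other SSYT lies in exactly one piece by the compatibility dichotomy. The only genuine obstacle is keeping the bookkeeping of $a_2$ versus $a_0$ straight so that the boundary case $(a_2,a_0)=(0,0)$ is placed in exactly one of the sums; once this is settled, the identity is immediate from \tref{Dun2} and \tref{Dun3}, which already give closed forms for each summand $g_\lambda[a{+}i,b{-}i,c]$ and $g_\lambda[a,b,c{-}i]$.
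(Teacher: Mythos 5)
Your proof is correct and follows essentially the same route as the paper, which simply observes that the first sum ranges over the configurations of \fref{1}(a) and the second over those of \fref{1}(b); you have filled in the details (the forced filling of height-3 columns, the dichotomy $a_2\geq 1\Rightarrow a_0=0$, and the placement of the boundary case $(a_2,a_0)=(0,0)$ in the $i=0$ term of the first sum) that the paper leaves implicit. Your careful treatment of the index range $i\geq 1$ in the second sum to avoid double-counting is exactly the right bookkeeping.
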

		
		\begin{proof}
			The first term $\sum_{i=0}^{b}g_{\lambda}[a{+}i,b{-}i,c]$ sums over all the cases in \fref{1}(a) and the second term $\sum_{i=0}^{c}g_{\lambda}[a,b,c{-}i]$ sums over all the cases in \fref{1}(b).
		\end{proof}
		
		Thus, we have a complete recursion for $g_\lambda$. The recursive formula for $g_\lambda$ not only shows that $g_\lambda$ is Schur-positive in $q,t$-analogs, also gives us a way of writing $g_\lambda$ into $q,t$-analogs and powers of $(qt)$.
For example, suppose $\lambda = 1^4$. Then $\lambda' =(4)$ so that 
taking into account the possible numbers of $0$'s in a tableau $T \in SSYT((4),012)$, 
we see that 
$$g_{(1^4)} = g_\lambda[0,0,0]+g_\lambda[0,0,1]+g_\lambda[0,0,2]+g_\lambda[0,0,3]+g_\lambda[0,0,4].$$
But $g_\lambda[0,0,0]=g_\lambda[0,0,1]=0$. 
We can apply Theorem \ref{Dun1} to compute 
$$
g_\lambda[0,0,2] = 
\sum_{i=0}^0 (qt)^i [2{-}i{-}\lfloor (i{+}1)/2 \rfloor \rightarrow 4{-}2 {-}3i]_{q,t} =[2]_{q,t},
$$
\begin{eqnarray*}
g_\lambda[0,0,3] &=& 
\sum_{i=0}^1 (qt)^i [3{-}i{-}\lfloor (i{+}1)/2 \rfloor \rightarrow 6{-}2 {-}3i]_{q,t}\\
&=& [3\rightarrow 4] +(qt)[1\rightarrow 1] = [3]_{q,t}+[4]_{q,t} +qt,
\end{eqnarray*}
and
\begin{eqnarray*}
g_\lambda[0,0,4] &=& 
\sum_{i=0}^1 (qt)^i [4{-}i{-}\lfloor (i{+}1)/2 \rfloor \rightarrow 8{-}2 {-}3i]_{q,t}\\
&=& [4\rightarrow 6] +(qt)[2\rightarrow 3] \\
&=& [4]_{q,t}+[5]_{q,t} +[6]_{q,t}+qt([2]_{q,t}+[3]_{q,t}),
\end{eqnarray*}
Thus 
$$g_{(1^4)} = [2]_{q,t}+[3]_{q,t} +
2 [4]_{q,t}+[5]_{q,t} +[6]_{q,t}+qt(1+[2]_{q,t}+[3]_{q,t}).$$

In general, we see that 
$$\langle \Delta_{e_2} e_n[X],e_n[X]\rangle =
\sum_{s=2}^n g_\lambda[0,0,s].$$
We claim that $g_\lambda[0,0,n]$ is a $q,t$-analogue of $2\binom{n+1}{3}$. 
To see this, we shall use a formula of \cite{HRW} to show 
that $$\langle \Delta_{e_2} e_n[X],e_n[X]\rangle|_{q=t=1} =
2\binom{n+2}{4}$$ from which it follows 
that 
\begin{eqnarray*} 
g_\lambda[0,0,n]|_{q=t=1} &=& 
\langle \Delta_{e_2} e_n[X],e_n[X]\rangle|_{q=t=1} -\langle \Delta_{e_2} e_{n-1}[X],e_{n-1}[X]\rangle|_{q=t=1} \\
&=& 2 \binom{n+2}{4}- 2\binom{n+1}{4} = 2\binom{n+1}{3}. 
\end{eqnarray*}
It is proved in  \cite{HRW} that 
\begin{equation}
\Delta_{e_k} e_n[X]|_{t=1/q} = \frac{q^{\binom{k}{2}-k(n-1)}}{[k+1]_q} 
\qbinom{n}{k} e_n[X(1{+}q{+}\cdots {+}q^k)].
\end{equation}
Repeatedly applying the sum rule that 
$$s_{\lambda}[X+Y]= \sum_{\mu \subseteq \lambda} s_\mu[X]s_{\lambda/\mu}[Y],$$
we see that 
\begin{eqnarray*}
\Delta_{e_k} e_n[X]|_{t=1/q}
&=& \frac{q^{\binom{k}{2}-k(n-1)}}{[k+1]_q} \qbinom{n}{k}
\sum_{\overset{i_s \geq 0}{i_0+i_1+ \cdots +i_k =n}} 
\prod_{s=0}^k e_{i_s}[q^s X] \\ 
&=& \frac{q^{\binom{k}{2}-k(n-1)}}{[k+1]_q}  \qbinom{n}{k}
\sum_{\overset{i_s \geq 0}{i_0+i_1+ \cdots +i_k =n}} 
\prod_{s=0}^k q^{si_s}e_{i_s}[X].
\end{eqnarray*}
It follows that 
\begin{equation*}
\langle \Delta_{e_k} e_n[X],e_n[X]\rangle|_{t=1/q} = 
\frac{q^{\binom{k}{2}-k(n-1)}}{[k+1]_q}  \qbinom{n}{k}
\sum_{\overset{i_s \geq 0}{i_0+i_1+ \cdots +i_k =n}} 
\prod_{s=0}^k q^{si_s}.
\end{equation*}
But it is easy to see that 
$$\sum_{\overset{i_s \geq 0}{i_0+i_1+ \cdots +i_k =n}} 
\prod_{s=0}^k q^{si_s} = \qbinom{n+k}{k}$$ since the LHS is just 
the sum of $q^{|\lambda|}$ over all partitions $\lambda$ contained in 
the $n \times k$ rectangle. 
Thus 
\begin{equation}\label{Dekenen}
\langle \Delta_{e_k} e_n[X],e_n[X]\rangle|_{t=1/q}
= \frac{q^{\binom{k}{2}-k(n-1)}}{[k+1]_q}  \qbinom{n}{k} \qbinom{n+k}{k}.
\end{equation}

Setting $q=1$ and $k =2$ in (\ref{Dekenen}), we see that 
\begin{eqnarray*}
\langle \Delta_{e_2} e_n[X],e_n[X]\rangle|_{q=t=1} &=& 
\frac{1}{3}\frac{n!}{2!(n-2)!}\frac{(n+2)!}{2!n!} \\
&=& \frac{4}{2}\frac{(n+2)!}{4!(n-2)!} = 2 \binom{n+2}{4}.
\end{eqnarray*}

Next consider $\lambda =(1^2,2)$ so that $\lambda' = (1,3)$. In this case, 
we can classify the tableau $T \in SSYT((1,3),012)$ by whether 
the bottom corner square contains a 1, in which case we get a 
term $g_{\lambda}[1,0,2]$, or the bottom corner square contains a 0, in which 
case we get a contribution of $g_{\lambda}[0,1,2]$, $g_{\lambda}[0,1,1]$, or 
$g_{\lambda}[0,1,0]$, depending on the number of $0$'s in the first row. 
But $g_{\lambda}[0,1,0] = g_{\lambda}[0,0,1]=0$. By Theorem 
\ref{Dun3},
\begin{eqnarray*}
g_{\lambda}[0,1,1]&=& \sum_{i=0}^1 g_{\lambda}[i,0,2{-}2i] \\
&=& g_{\lambda}[0,0,2] + g_{\lambda}[1,0,0] \\
&=& [2]_{q,t} +g_{\lambda}[1,0,0],
\end{eqnarray*} 
and 
\begin{eqnarray*}
g_{\lambda}[0,1,2] &=& \sum_{i=0}^1 g_{\lambda}[i,0,3{-}2i] \\
&=& g_{\lambda}[0,0,3] + g_{\lambda}[1,0,1] \\
&=& [3]_{q,t} +[4]_{q,t} +qt +g_{\lambda}[1,0,1].
\end{eqnarray*} 
By Theorem \ref{Dun2}
\begin{eqnarray*}
g_{\lambda}[1,0,0] &=& (qt) g_{\lambda}[0,0,0]+[3\rightarrow 3] = [1]_{q,t},\\
g_{\lambda}[1,0,1] &=& (qt) g_{\lambda}[0,0,1]+[4\rightarrow 5] = [2]_{q,t}+[3]_{q,t}, 
\ \mbox{and} \\
g_{\lambda}[1,0,2]&=& (qt) g_{\lambda}[0,0,2]+[3\rightarrow 5] \\
&=&  (qt)[2]_{q,t}+[3]_{q,t}+[4]_{q,t}+[5]_{q,t}. 
\end{eqnarray*}
It follows that 
$$g_{1^2,2} = [1]_{q,t}+2[2]_{q,t}+3[3]_{q,t}+2[4]_{q,t}+
[5]_{q,t}+(qt)(1+[2]_{q,t}).$$

\subsection{The relation between the combinatorial proof and direct computation}

We show in this subsection that the combinatorial involution of the enriched tableaux implies the cancellation step of the computation of $g_\lambda[a,k_1,k_2]$. 

Firstly, we present the case $g_\lambda[0,0,k]$. We illustrate the relation by an example of $k=5$. Let $\lambda=(5)$ and suppose there are no $0$'s in the filling. Then the contribution of all tableaux of this form is $g_\lambda[0,0,5]$. 

If there are $i$ $1$'s in the filled Young diagram, then there are $5{-}i$ $2$'s. \tref{Dun1} shows that when we sum over all cases classified by number of $1$'s, we have 
\begin{eqnarray*}
g_\lambda[0,0,5]&=&\sum_{i=0}^{k}w(i,k{-}i)\\
&=&-\qtn{4}-(qt)^2\qtn{1}+(qt)^2\qtn{2}+(qt)\qtn{5}+\qtn{8}\\
&&-(qt)\qtn{2}+(qt)^2\qtn{1}+(qt)\qtn{4}+\qtn{7}\\
&&+0+(qt)\qtn{3}+\qtn{6}\\
&&+(qt)\qtn{2}+\qtn{5}\\
&&+\qtn{4}\\
&=&\qtn{5\rightarrow 8}+(qt)\qtn{3\rightarrow 5}+(qt)^2\qtn{2}.
\end{eqnarray*}

Notice that there is a cancellation in the last step of the equation. The cancellation cancels terms of different signs in the last steps, which follows the same idea of the combinatorial proof. The injection of the combinatorial proof maps the negative terms into the positive terms, giving this cancellation. \taref{Dun1} shows all enriched tableaux of shape $\lambda=(5)$ with no $0$'s and their corresponding weight. From this, we can see that the first column and the first two rows of the second column are canceled, leaving only the red terms. These give $g_\lambda[0,0,5]=\qtn{5\rightarrow 8}+(qt)\qtn{3\rightarrow 5}+(qt)^2\qtn{2}$.

\begin{table}[ht!]
	\centering
	\begin{tabular}{|c|c|c|c|c|c|}
		\hline
		\# of $1$'s & no $\overline{2}$ & one $\overline{2}$ & two $\overline{2}$ & three $\overline{2}$ & four $\overline{2}$ \\\hline
		$0$ &\tabw{0}{4}{0}{-\qtn{4}} & \tabw{0}{3}{1}{-(qt)^2\qtn{1}} & \tabwr{0}{2}{2}{(qt)^2\qtn{2}} & \tabwr{0}{1}{3}{(qt)\qtn{5}} & \tabwr{0}{0}{4}{\qtn{8}} \\\hline
		$1$ &\tabw{1}{3}{0}{-(qt)\qtn{2}} & \tabw{1}{2}{1}{(qt)^2\qtn{1}} & \tabwr{1}{1}{2}{(qt)\qtn{4}} & \tabwr{1}{0}{3}{\qtn{7}} & \\\hline
		$2$ &\tabw{2}{2}{0}{0} & \tabwr{2}{1}{1}{(qt)\qtn{3}} & \tabwr{2}{0}{2}{\qtn{6}} &  & \\\hline
		$3$ &\tabw{3}{1}{0}{(qt)\qtn{2}} & \tabwr{3}{0}{1}{\qtn{5}} &&&\\\hline
		$4$ &\tabw{4}{0}{0}{\qtn{4}} & &&& \\\hline
	\end{tabular}
	\caption{Contribution of tableaux of shape $\lambda=(5)$}
	\label{table:Dun1}
\end{table}

Next, we show this relation for the case $g_{\lambda}[a,0,k]$. The semi-standard Young tableaux contributing to such $g_{\lambda}$ contain 2 parts -- the first part has $a$ columns of $2$ rows of which the bottom row is filled with $1$'s, and the second part has $k$ columns of $1$ row filled with some $1$'s and $2$'s, look like $T=$
\raisebox{-13pt}{\begin{tikzpicture}[scale =.35]
\draw[help lines] (6,0) rectangle (9,2);
\draw[help lines] (6,0) rectangle (12,1);
\fillll{7}{1}{1}\fillll{8}{1}{\cdots}\fillll{9}{1}{1}\fillll{10}{1}{1}\fillll{11}{1}{\cdots}\fillll{12}{1}{2}
\fillll{7}{2}{2}\fillll{8}{2}{\cdots}\fillll{9}{2}{2}
\draw [thick, blue,decorate,decoration={brace,amplitude=5pt,mirror},xshift=0.4pt,yshift=-0.4pt](6,0) -- (9,0) node[black,midway,yshift=-.4cm] {\footnotesize $a$};
\draw [thick, blue,decorate,decoration={brace,amplitude=5pt,mirror},xshift=0.4pt,yshift=-0.4pt](9,0) -- (12,0) node[black,midway,yshift=-.4cm] {\footnotesize $k$};
\end{tikzpicture}}.

In the proof of \tref{Dun2}, we have the recursion that $g_{\lambda}[a,0,k] =(qt)^ag_{\lambda}[0,0,k] + \sum_{j=0}^{a-1}(qt)^j\sum_{i=0}^{k}\qtn{2k{-}i{+}3a{-}3j{-}2}$. We want to show the recursion combinatorially. If we fill all cells of the top row of the first $a$ columns of $T$ with $2$, then the contribution of the first $a$ columns is $(qt)^a$, and the contribution of the last $k$ columns is $g_{\lambda}[0,0,k]$ for the same reason as the first case of $g_{\lambda}[0,0,k]$, except that the last $k$ columns cannot be all $1$'s as there should be a $\hat 2$ in these columns. Actually this exceptional restriction about filling does not affect $g_{\lambda}[0,0,k]$ as an all $1$'s filling case contributes $0$ to $g_{\lambda}[0,0,k]$. Otherwise, if there is at least one $\bar 2$ or $\hat 2$ in the first $a$ columns, then there must be no $2$ in the last $k$ columns. Suppose there are $i$ $1$'s in the last $k$ columns and $j$ $2$'s in the first $a$ columns. Then the contribution is $(qt)^j \qtn{2k{-}i{+}3a{-}3j{-}2}$. These are all fixed points in the involution described in the combinatorial proof, and we now have found the implication of combinatorial involution in the cancellation of $g_{\lambda}[a,0,k]$.

Finally, for the case $g_\lambda[a,k_1,k_2]$, the combinatorics is straightforward in the recursion in \tref{Dun4}, thus we see the connection of the combinatorial proof and the direct computation.

\section{Proofs by Generating Functions\label{s-gen-fun}}
Here we illustrate two proofs using generating functions. They are not different in nature.

\noindent
First generating function proof.

It is clear that $g_{\lambda}=0$ unless $\lambda'$ has at most $3$ parts, i.e., $\lambda'=(a{+}b{+}c,b{+}c,c)$ for $a,b,c\ge 0$.
The idea is to show the generating function
$$G(u_1,u_2,u_3)=\sum_{a,b,c\ge 0} g_{(a{+}b{+}c,b{+}c,c)'} u_1^a u_2^b u_3^c $$
has only nonnegative coefficients.

Firstly, we use the quotient formula for Schur functions:
$$ s_{a{+}b{+}c,b{+}c,c}[x+y+z]=\frac{1}{(x{-}y)(y{-}z)(x{-}z)} \det \begin{pmatrix}
                                                           x^{a+b+c+2} & x^{b+c+1} & x^c \\
                                                           y^{a+b+c+2} & y^{b+c+1} & y^{c} \\
                                                           z^{a+b+c+2} & z^{b+c+1} & z^{c} \\
                                                         \end{pmatrix}.
$$
Next, from the view of MacMahon partition analysis (see, e.g., \cite{MPA1}, \cite{xiniterate}), $G$ is easily seen to be a rational power series. 
Here we only need the following fact:

If $\gamma_{ij}\ge 0$ for all $i,j$, then
\begin{multline*}
  \sum_{a,b,c\ge 0} x^{\gamma_{11}a+\gamma_{12} b+\gamma_{13} c}
y^{\gamma_{21}a+\gamma_{22} b+\gamma_{23} c}
z^{\gamma_{31}a+\gamma_{32} b+\gamma_{33} c}
 u_1^a u_2^b u_3^c \\
 =  \frac{1}{(1-x^{\gamma_{11}} y^{\gamma_{21}} z^{\gamma_{31}}u_1)(1-x^{\gamma_{12}} y^{\gamma_{22}} z^{\gamma_{32}}u_2)
 (1-x^{\gamma_{13}} y^{\gamma_{23}} z^{\gamma_{33}}u_3)}.
\end{multline*}

One simple case will illustrate the idea. By the quotient formula,
$$ s_{a+b+c,b+c,c}[1+q+t]=\frac{1}{(1{-}q)(1{-}t)(q{-}t)}(t^{a+b+c+2} q^{b+c+1} + \text{other terms}),$$
where the ``other terms'' are the five terms of similar type obtained by expanding the determinant.
Now we have
$$  \sum_{a,b,c\ge 0} R(q,t) t^{a+b+c} q^{b+c} u_1^a u_2^b u_3^c = R(q,t) \frac{1}{(1-tu_1)(1-u_2 qt)(1-u_3qt)},$$
where $R(q,t)=t^2 q/((1{-}q)(1{-}t)(q{-}t))$ is a rational function independent of $a,b,c$.

Thus we can write $G$ as a sum of $6\times 3=18$ rational functions. This can be carried out by Maple and we normalize to obtain
$$ G= \frac{P}{(1-m_1)(1-m_2)\cdots (1-m_{15})},$$
where $m_i$ are monomials, and $P$ is a polynomial with $1023$ terms.
Through a complicated search procedure, we found a decomposition
$ G= \sum_{i=1}^{27} Q_i$ where each $Q_i$ is easily seen to have only nonnegative coefficients. For instance, one of the terms is
$$Q_1=  {\frac {u_{{1}}u_{{2}}q \left( {q}^{3}u_{{2}}{+}t \right) }{ \left( qtu_
{{2}}{-}1 \right)  \left( {t}^{2}u_{{1}}{-}1 \right)  \left( u_{{1}}{-}1
 \right)  \left( qu_{{2}}{-}1 \right)  \left( qtu_{{3}}{-}1 \right)
 \left( qu_{{1}}{-}1 \right)  \left( {q}^{2}u_{{2}}{-}1 \right)  \left( {q
}^{3}u_{{2}}{-}1 \right) }}.
  $$
This proves that $G$ has only nonnegative coefficients and hence $g_\lambda \in \Z_{\ge 0}[q,t]$.
As a proof, we only need to verify that these $Q_i$'s sum to $G$ (which is routine by computer) but not how to find them. 
We are not going to explain in detail how to decompose $G$ since the idea is not mature.

\medskip
\noindent
Second generating function proof.

After the first proof was obtained, Professor Adriano Garsia investigated some data of $g_\lambda$ and conjectured that
$g_\lambda$ is indeed also Schur positive in $q,t$-analogs. More precisely, we have
$$ g_\lambda = \sum_{i>j\ge 0 } b_{i,j} s_{(i-1,j)}[q{+}t] =  \sum_{i>j\ge 0} b_{i,j} q^j t^j[i{-}j{-}1]_{q,t}, $$
where $b_{i,j}$ are nonnegative for all $i>j\ge 0$.

This is equivalent to writing
$(t-q)G = F - \tau F $, where
$$F= \sum_{i>j \ge 0}  b_{i,j}(u_1,u_2,u_3) t^i q^j ,$$
and showing the nonnegativity of $b_{i,j}(u_1,u_2,u_3)$.
To obtain an explicit formula of $F$ from $G$, it is better to make the change of variable by $q=\bar q/t$. Then
\begin{align*}
  F&= \sum_{i>j \ge 0}  b_{i,j}(u_1,u_2,u_3) t^{i-j} \bar q^j ,\\
  \tau F &= \sum_{i>j \ge 0}  b_{i,j}(u_1,u_2,u_3) q^i t^j = \sum_{i>j \ge 0}  b_{i,j}(u_1,u_2,u_3) t^{-(i-j)} \bar q^i.
\end{align*}
It follows that $F$ consists of all terms in the series expansion of $(t-\bar q/t)G$ with positive powers in $t$. This can be realized by the following constant term
$$ F = (z {-}\bar q/z) G \big|_{t=z}  \sum_{k\ge 1} (t/z)^k \Big|_{z^0}= (z {-}\bar q/z) G \big|_{t=z}  \frac{t/z}{1-t/z} \Big|_{z^0}.$$
Thus $F$ can be calculated by MacMahon's partition analysis techniques.

The complexity of $G$ suggests that this approach does not work for $\Delta_{e_3}e_n[X]$, so we go over the computation and the use of 
Lemma \ref{l3} which is the point of departure for the other proofs.

Using the explicit formula of $F$, which has 132 terms in the numerator and 11 factors in the denominator, we are able to decompose $F$ as a sum of $7$ rational functions that are easily seen to have nonnegative coefficients:
\begin{multline*}
F=  -{\frac { \left( qt{u_{{1}}}^{3}+qt{u_{{1}}}^{2}u_{{2}}+qtu_{{1}}{u_{{
2}}}^{2}+t{u_{{1}}}^{2}+tu_{{1}}u_{{2}}+t{u_{{2}}}^{2} \right) t}{
 \left( u_{{1}}-1 \right)  \left( tu_{{2}}-1 \right)  \left( {t}^{3}u_
{{3}}-1 \right)  \left( {t}^{2}u_{{1}}-1 \right)  \left( {t}^{3}u_{{2}
}-1 \right)  \left( {q}^{2}{t}^{2}{u_{{1}}}^{3}-1 \right)  \left( {q}^
{2}{t}^{2}{u_{{2}}}^{3}-1 \right) }}\\
-{\frac {{u_{{2}}}^{3}t \left( {t}
^{3}+qt \right) }{ \left( u_{{1}}-1 \right)  \left( tu_{{2}}-1
 \right)  \left( {t}^{3}u_{{3}}-1 \right)  \left( {t}^{2}u_{{1}}-1
 \right)  \left( {t}^{3}u_{{2}}-1 \right)  \left( {q}^{2}{t}^{2}{u_{{2
}}}^{3}-1 \right)  \left( {t}^{2}u_{{2}}-1 \right) }}\\
-{\frac {u_{{2}}t
 \left( {q}^{2}{t}^{4}{u_{{1}}}^{2}{u_{{2}}}^{2}+{q}^{2}{t}^{2}{u_{{1}
}}^{2}u_{{2}}+{q}^{2}{t}^{2}u_{{1}}{u_{{2}}}^{2}+q{t}^{2}{u_{{1}}}^{2}
+q{t}^{2}u_{{1}}u_{{2}}+q{t}^{2}{u_{{2}}}^{2}+{t}^{2}u_{{1}}+{t}^{2}u_
{{2}}+1 \right) }{ \left( u_{{1}}-1 \right)  \left( tu_{{2}}-1
 \right)  \left( {t}^{3}u_{{3}}-1 \right)  \left( {t}^{3}u_{{2}}-1
 \right)  \left( {q}^{2}{t}^{2}{u_{{1}}}^{3}-1 \right)  \left( {q}^{2}
{t}^{2}{u_{{2}}}^{3}-1 \right)  \left( qtu_{{2}}-1 \right) }}\\
-{\frac {
 \left( qtu_{{1}}+t \right) {t}^{2}{u_{{1}}}^{3}}{ \left( {q}^{2}{t}^{
2}{u_{{1}}}^{3}-1 \right)  \left( {t}^{2}u_{{1}}-1 \right)  \left( {t}
^{3}u_{{3}}-1 \right)  \left( tu_{{2}}-1 \right)  \left( qtu_{{2}}-1
 \right)  \left( u_{{1}}-1 \right)  \left( tu_{{1}}-1 \right) }}\\
 -{
\frac {u_{{1}}u_{{2}}{t}^{2}}{ \left( {t}^{2}u_{{1}}-1 \right)
 \left( {t}^{3}u_{{3}}-1 \right)  \left( tu_{{2}}-1 \right)  \left( qt
u_{{2}}-1 \right)  \left( u_{{1}}-1 \right)  \left( tu_{{1}}-1
 \right)  \left( {t}^{3}u_{{2}}-1 \right) }}\\
 -{\frac {u_{{1}}{t}^{2}u_{
{3}} \left( q{t}^{2}{u_{{1}}}^{2}+{t}^{2}u_{{1}}+1 \right) }{ \left( u
_{{1}}-1 \right)  \left( qtu_{{2}}-1 \right)  \left( tu_{{2}}-1
 \right)  \left( {t}^{3}u_{{3}}-1 \right)  \left( {q}^{2}{t}^{2}{u_{{1
}}}^{3}-1 \right)  \left( qtu_{{3}}-1 \right)  \left( tu_{{1}}-1
 \right) }}\\
 -{\frac { \left( {q}^{2}{t}^{4}{u_{{1}}}^{2}{u_{{2}}}^{2}+{
q}^{2}{t}^{2}{u_{{1}}}^{2}u_{{2}}+{q}^{2}{t}^{2}u_{{1}}{u_{{2}}}^{2}+q
{t}^{2}{u_{{1}}}^{2}+q{t}^{2}u_{{1}}u_{{2}}+q{t}^{2}{u_{{2}}}^{2}+{t}^
{2}u_{{1}}+{t}^{2}u_{{2}}+1 \right) tu_{{3}}}{ \left( u_{{1}}-1
 \right)  \left( qtu_{{2}}-1 \right)  \left( tu_{{2}}-1 \right)
 \left( {t}^{3}u_{{3}}-1 \right)  \left( {q}^{2}{t}^{2}{u_{{1}}}^{3}-1
 \right)  \left( {q}^{2}{t}^{2}{u_{{2}}}^{3}-1 \right)  \left( qtu_{{3
}}-1 \right) }}.
\end{multline*}
This may be treated as our second proof, but in the same vein of our first proof.

\section{The $\Delta_{e_3}$ Case\label{s-e3}}

For the $\Delta_{e_3}e_n[X]$ case, we have a similar formula
\begin{multline*}
 e_4[X]=\\
{\frac {\TH_{{4}}[X;q,t]}{ \left( q{-}t \right)  \left( {q}^{2}{-}t \right)
 \left( {q}^{3}{-}t \right) }}-{\frac { \left( {q}^{2}{+}q{+}t{+}1 \right) \TH_{{3,1}[X;q,t]}}{ \left( q{+}t \right)  \left( {q}^{3}{-}t \right)
 \left( q{-}t \right) ^{2}}}-{\frac { \left( qt{-}1 \right) \TH_{{2,2
}}[X;q,t]}{ \left( -{t}^{2}{+}q \right)  \left( {q}^{2}{-}t \right)  \left( q{-}t
 \right) ^{2}}}\\+{\frac { \left( {t}^{2}{+}q{+}t{+}1 \right) \TH_{{2,1,1
}}[X;q,t]}{ \left( q{+}t \right)  \left( -{t}^{3}{+}q \right)  \left( q{-}t
 \right) ^{2}}}-{\frac {\TH_{{1,1,1,1}}[X;q,t]}{ \left( q{-}t \right)
 \left( -{t}^{3}{+}q \right)  \left( -{t}^{2}{+}q \right) }}.
\end{multline*}

This gives
\begin{multline*}
    \scalar{ \Delta_{e_3} e_n[X]}{ s_{\lambda'}}  = \\
{\frac {s_{\lambda}[B_4]}{ \left( q{-}t \right)  \left( {q}^{2}{-}t \right)
 \left( {q}^{3}{-}t \right) }}-{\frac { \left( {q}^{2}{+}q{+}t{+}1 \right) s_{\lambda}[B_{3,1}]}{ \left( q{+}t \right)  \left( {q}^{3}{-}t \right)
 \left( q{-}t \right) ^{2}}}-{\frac { \left( qt{-}1 \right) s_\lambda[B_{{2,2
}}]}{ \left( -{t}^{2}{+}q \right)  \left( {q}^{2}{-}t \right)  \left( q{-}t
 \right) ^{2}}} \\ + {\frac { \left( {t}^{2}{+}q{+}t{+}1 \right) s_\lambda[B_{{2,1,1
}}]}{ \left( q{+}t \right)  \left( -{t}^{3}{+}q \right)  \left( q{-}t
 \right) ^{2}}}-{\frac {s_\lambda[B_{{1,1,1,1}}]}{ \left( q{-}t \right)
 \left( -{t}^{3}{+}q \right)  \left( -{t}^{2}{+}q \right) }}.
\end{multline*}

By playing with partial fraction decompositions, the best formula we
 have is
\begin{align}
  \scalar{ \Delta_{e_3} e_n[X]}{ s_{\lambda'}}  &= \frac{F_\lambda(q,t)- F_\lambda(t,q)}{q-t}
  {-\frac {s_{\lambda}[1{+}q{+}t{+}q^2]/q^2-s_\lambda[1{+}q{+}t{+}t^2]/t^2}{2 \left( {q}^{2}{-}{t}^{2} \right) }},
\end{align}
where $F_\lambda=F_\lambda(q,t)$ is given by
\begin{multline*}
  F_\lambda ={\frac {s_\lambda[1{+}q{+}q^2{+}q^3]-s_\lambda[1{+}q{+}t{+}qt]}{ \left( q{-}1 \right) {q}^{2} \left( {q}^{2}{-}t \right) }}-{
\frac {s_\lambda[1{+}q{+}q^2{+}q^3]-s_{\lambda}[1{+}q{+}t{+}q^2]}{{q}^{2} \left( q{-}1 \right)  \left( {q}^{3}{-}t \right) }}\\
-{
\frac {(q{+}1)(s_{\lambda}[1{+}q{+}t{+}q^2]-s_\lambda[1{+}q{+}t{+}qt])}{ 2\left( q{-}t \right) {q}^{2} \left( q{-}1 \right) }}
+{\frac {s_\lambda[1{+}q{+}t{+}qt]}{2{q}^{2}t}}.
\end{multline*}

One can use this formula to prove 
 that $g_\lambda$ is a polynomial divided by $(1{-}q)$.
Nevertheless, it clear that this approach becomes more and more complicated 
so that the proof of the general $\Delta_{e_d}e_n[X]$ case seems to require 
a new idea.

\end{document}